\documentclass[12pt,reqno]{amsart}
\usepackage[a4paper,left=30mm,right=30mm,top=30mm,bottom=35mm,marginpar=20mm]{geometry}

\usepackage{amsmath,amsthm,amssymb,amsfonts,amsopn}
\usepackage[nobysame]{amsrefs}
\usepackage{url}
\usepackage[dvipsnames]{xcolor}
\usepackage[english=american]{csquotes}
\usepackage{enumerate}
\usepackage{stmaryrd} 
\usepackage{mathrsfs}
\usepackage[normalem]{ulem}
\usepackage{mathtools}
\usepackage{color, colortbl}
\definecolor{Gray}{gray}{0.9}
\usepackage{bbm}
\usepackage{bm}
\usepackage{enumitem}
\usepackage[multiple]{footmisc}
\usepackage{hyperref}
\hypersetup{
    bookmarks=true,         
    unicode=false,          
    pdftoolbar=true,        
    pdfmenubar=true,        
    pdffitwindow=false,     
    pdfstartview={FitH},    
    pdftitle={My title},    
    pdfauthor={Author},     
    pdfsubject={Subject},   
    pdfcreator={Creator},   
    pdfproducer={Producer}, 
    pdfkeywords={keyword1, key2, key3}, 
    pdfnewwindow=true,      
    colorlinks=true,       
    linkcolor=blue ,          
    citecolor=blue ,        
    filecolor=magenta,      
    urlcolor=Aquamarine           
}
\usepackage{caption}
\usepackage{tikz}

\usepackage{oubraces}
\usepackage{siunitx}
\usepackage{xcolor}
\usepackage{booktabs,colortbl, array}
\usepackage{pgfplotstable}
\pgfplotsset{compat=1.8}

\definecolor{rulecolor}{RGB}{0,71,171}
\definecolor{tableheadcolor}{gray}{0.92}

\colorlet{tableheadcolor}{gray!25} 
\colorlet{tablerowcolor}{gray!10} 
 %






%
%

\makeatletter
\DeclareFontFamily{OMX}{MnSymbolE}{}
\DeclareSymbolFont{MnLargeSymbols}{OMX}{MnSymbolE}{m}{n}
\SetSymbolFont{MnLargeSymbols}{bold}{OMX}{MnSymbolE}{b}{n}
\DeclareFontShape{OMX}{MnSymbolE}{m}{n}{
    <-6>  MnSymbolE5
    <6-7>  MnSymbolE6
    <7-8>  MnSymbolE7
    <8-9>  MnSymbolE8
    <9-10> MnSymbolE9
    <10-12> MnSymbolE10
    <12->   MnSymbolE12
}{}
\DeclareFontShape{OMX}{MnSymbolE}{b}{n}{
    <-6>  MnSymbolE-Bold5
    <6-7>  MnSymbolE-Bold6
    <7-8>  MnSymbolE-Bold7
    <8-9>  MnSymbolE-Bold8
    <9-10> MnSymbolE-Bold9
    <10-12> MnSymbolE-Bold10
    <12->   MnSymbolE-Bold12
}{}
\let\llangle\@undefined
\let\rrangle\@undefined
\DeclareMathDelimiter{\llangle}{\mathopen}%
{MnLargeSymbols}{'164}{MnLargeSymbols}{'164}
\DeclareMathDelimiter{\rrangle}{\mathclose}%
{MnLargeSymbols}{'171}{MnLargeSymbols}{'171}
\makeatother


\newtheorem{theorem}{Theorem}
\newtheorem{lemma}[theorem]{Lemma}
\newtheorem{proposition}[theorem]{Proposition}
\newtheorem{corollary}{Corollary}[theorem]
\newtheorem{conjecture}[theorem]{Conjecture}

\theoremstyle{definition}
\newtheorem{definition}{Definition}

\theoremstyle{remark}
\newtheorem{remark}{Remark}[theorem]
\newtheorem{example}[theorem]{Example}
\newtheorem{open}{Open Problem}[theorem]

\newcommand{\Crm}{\mathrm{C}}

\newcommand{\Trm}{\mathrm{T}}

\newcommand{\Acal}{\mathcal{A}}
\newcommand{\Bcal}{\mathcal B}
\newcommand{\Ccal}{\mathcal{C}}
\newcommand{\Dcal}{\mathcal{D}}

\newcommand{\Fcal}{\mathcal{F}}
\newcommand{\Gcal}{\mathcal{G}}
\newcommand{\Hcal}{\mathcal{H}}

\newcommand{\Lcal}{\mathcal{L}}
\newcommand{\Mcal}{\mathcal{M}}

\newcommand{\Pcal}{\mathcal{P}}

\newcommand{\Scal}{\mathcal{S}}

\newcommand{\Bfrak}{\mathfrak{B}}

\newcommand{\Abb}{\mathbb{A}}
\renewcommand{\Bbb}{\mathbb{B}}
\newcommand{\Cbb}{\mathbb{C}}

\newcommand{\Fbb}{\mathbb{F}}

\newcommand{\Kbb}{\mathbb{K}}
\newcommand{\Lbb}{\mathbb{L}}

\newcommand{\Nbb}{\mathbb{N}}

\newcommand{\Rbb}{\mathbb{R}}
\newcommand{\Sbb}{\mathbb{S}}

\DeclareMathOperator{\im}{Im}

\DeclareMathOperator{\diverg}{div}
\DeclareMathOperator{\curl}{curl}
\DeclareMathOperator{\dist}{dist}
\DeclareMathOperator{\rank}{rank}

\DeclareMathOperator{\spn}{span}

\DeclareMathOperator{\supp}{supp}

\newcommand{\set}[2]{\left\{\, #1 \  \textup{\textbf{:}}\  #2 \,\right\}}

\newcommand{\setb}[2]{\bigl\{\, #1 \  \textup{\textbf{:}}\  #2 \,\bigr\}}
\newcommand{\setB}[2]{\Bigl\{\, #1 \  \textup{\textbf{:}}\  #2 \,\Bigr\}}
\newcommand{\setBB}[2]{\biggl\{\, #1 \  \textup{\textbf{:}}\  #2 \,\biggr\}}

\newcommand{\dpr}[1]{\langle #1 \rangle}

\newcommand{\cl}[1]{\overline{#1}}

\newcommand{\dd}{\;\mathrm{d}}

\newcommand{\N}{\mathbb{N}}
\newcommand{\R}{\mathbb{R}}
\newcommand{\C}{\mathbb{C}}

\newcommand{\loc}{\mathrm{loc}}
\newcommand{\sym}{\mathrm{sym}}

\newcommand{\tolong}{\longrightarrow}

\newcommand{\toweakstar}{\overset{*}\rightharpoonup}

\newcommand{\todown}{\downarrow}

\newcommand{\embed}{\hookrightarrow}

\newcommand{\SmallO}{\mathrm{\textup{o}}}
\newcommand{\sbullet}{\begin{picture}(1,1)(-0.5,-2)\circle*{2}\end{picture}}
\newcommand{\frarg}{\,\sbullet\,}

\newcommand{\BD}{\mathrm{BD}}


\newcommand{\eps}{\epsilon}

\DeclareMathOperator{\Curl}{Curl}
\DeclareMathOperator{\Tan}{Tan}

\newcommand{\proofstep}[1]{\textit{#1}}

\newcommand{\Leb}{\mathscr L}

\renewcommand{\eps}{\varepsilon}
\renewcommand{\phi}{\varphi}

\newcommand{\M}{\mathcal M}






\makeatletter
\renewcommand*\env@matrix[1][*\c@MaxMatrixCols c]{%
    \hskip -\arraycolsep
    \let\@ifnextchar\new@ifnextchar
    \array{#1}}
\makeatother

\DeclareMathOperator{\Gr}{Gr}

\DeclareMathOperator{\Tr}{Tr}
\DeclareMathOperator{\tr}{tr}

\DeclareMathOperator{\Div}{div}




\newcommand{\mres}{\mathbin{\vrule height 1.6ex depth 0pt width
        0.13ex\vrule height 0.13ex depth 0pt width 1.3ex}}



%
%
%
\def\vint_#1{\mathchoice%
    {\mathop{\kern 0.2em\vrule width 0.6em height 0.69678ex depth -0.58065ex
            \kern -0.8em \intop}\nolimits_{\kern -0.4em#1}}%
    {\mathop{\kern 0.1em\vrule width 0.5em height 0.69678ex depth -0.60387ex
            \kern -0.6em \intop}\nolimits_{#1}}%
    {\mathop{\kern 0.1em\vrule width 0.5em height 0.69678ex depth -0.60387ex
            \kern -0.6em \intop}\nolimits_{#1}}%
    {\mathop{\kern 0.1em\vrule width 0.5em height 0.69678ex depth -0.60387ex
            \kern -0.6em \intop}\nolimits_{#1}}}

\newcommand{\aveint}[2]{\mathchoice%
    {\mathop{\kern 0.2em\vrule width 0.6em height 0.69678ex depth -0.58065ex
            \kern -0.8em \intop}\nolimits_{\kern -0.45em#1}^{#2}}%
    {\mathop{\kern 0.1em\vrule width 0.5em height 0.69678ex depth -0.60387ex
            \kern -0.6em \intop}\nolimits_{#1}^{#2}}%
    {\mathop{\kern 0.1em\vrule width 0.5em height 0.69678ex depth -0.60387ex
            \kern -0.6em \intop}\nolimits_{#1}^{#2}}%
    {\mathop{\kern 0.1em\vrule width 0.5em height 0.69678ex depth -0.60387ex
            \kern -0.6em \intop}\nolimits_{#1}^{#2}}}


\title[]{A look into some of the fine properties of functions with bounded $\Acal$-variation}

\date{\today}

\author[A. Arroyo-Rabasa]{Adolfo Arroyo-Rabasa}
\address[A. Arroyo-Rabasa]{Dipartimento di Matematica,
Largo Bruno Pontecorvo 5, 56127 Pisa, Italy}
\email{\href{mailto:adolfo.rabasa@unipi.it}{adolfo.rabasa@unipi.it}}

\author[A. Skorobogatova]{Anna Skorobogatova}
\address[A. Skorobogatova]{
Department of Mathematics, ETH Zürich, Rämistrasse 101, 8092 Zürich, Switzerland
}
\email{\href{mailto:Anna.skorobogatova@math.ethz.ch}{Anna.skorobogatova@math.ethz.chu}}

\subjclass[2010]{49Q20, 26B30}
\keywords{Structure theorem, fine properties, elliptic, complex-elliptic, rectifiability, bounded variation, approximate continuity}
\thanks{This project has received funding from the European Research Council (ERC) under the grant No 757254 (SINGULARITY) and the grant No 101078057 (ConFine).}

\begin{document}
    
    \begin{abstract}
        We establish certain fine properties for functions of bounded $\mathcal A$-variation known in the classical $BV$ setting. Here, $\mathcal A$ is a $k$th order constant-coefficient homogeneous linear differential operator with a finite-dimensional kernel (also known as a complex-elliptic operator).
        We prove that if $\Acal u$ can be represented by a finite Radon measure, then the potential $u$ has one-sided $L^p$-approximate limits on Lipschitz hypersurfaces, and, more generally, on countably rectifiable sets of codimension one. We use this to give pointwise characterizations of the (functional) interior and exterior traces. We also establish a quantitative scale-dependent continuity result, which allows us to prove that the Lebesgue discontinuity set has zero $(n-1)$-dimensional Riesz capacity. Lastly, we introduce a decomposition that reduces the complexity of analyzing $k$th-order operators to that of first-order methods and allows us to establish the $k$th order $L^p$-differentiability of $BV^\Acal$ maps.
    \end{abstract}

    \maketitle
\setcounter{tocdepth}{1}
{
  \hypersetup{linkcolor=black}
\tableofcontents
}

    \section{Introduction}
    The space of functions of bounded variation $BV(\Omega;\R^M)$ on an open set $\Omega \subset \R^n$ and with values on $\R^M$ consists of all functions $u \in L^1(\Omega;\R^M)$ for which the distributional gradient  can be represented by a matrix-valued finite Radon measure $Du \in \M(\Omega;\R^M \otimes \R^n)$. This space has been studied in great depth by numerous authors, resulting in an extensive classification of the various properties, most of which can be found in \cites{AFP2000,evans1992} and references therein. 
 \textsc{De Giorgi} \cites{de-giorgi-su-una-teoria-1954,de-giorgi-nuovi-teoremi-1955,de-giorgi-sulla-proprieta-1958,de-giorgi1961frontiere-orien}, and \textsc{Federer} \cites{federer-note-on-gauss-green-1958,federer1969geometric-measu} studied the particular class of BV functions that consists of characteristic functions of \emph{sets of finite perimeter}, and in 1960, \textsc{Fleming  \& Rishel} \cite{Fleming_Rishel_60} proved the \emph{co-area formula} 
    \[
    |Du|(B) = \int_{-\infty}^\infty |D\mathbf{1}_{\{u > t\}}|(B)\dd t.
    \]
   The existence of such a decomposition into a family of one-dimensional sections is an example of the \textit{fine properties} for BV functions. 
 These were studied in more detail by \textsc{Federer} \cite{federer1969geometric-measu} and \textsc{Vol'pert} \cite{Vol_pert_1967} 
    in the 1960s. Almost simultaneously, but using different methods, they showed that for $u \in BV(\Omega;\R^M)$ one can decompose the total variation gradient measure into mutually singular measures as
    
    \begin{equation}\label{eq:dec}
    Du = \nabla u \, \Leb^n \, + \, (u^+ - u^-) \otimes \nu_u \, \Hcal^{n-1} \mres J_u \, \, + D^c u.
    \end{equation}
    Here, $\nabla u$ denotes the density of the absolutely continuous part of $D u$ with respect to the $n$-dimensional Lebesgue measure $\Leb^n$,  $\Hcal^{n-1}$ denotes the $(n-1)$-dimensional Hausdorff measure, the \emph{jump set} $J_u$ is the set of \emph{approximate discontinuity points} $x$ where $u$ has one-sided limits $u^+(x) \neq u^-(x)$ with respect to a suitable orientation $\nu_u(x) \in \Sbb^{n-1}$, and the \emph{Cantor part} $D^c u$ is the restriction of the singular part (with respect to $\mathscr{L}^n$) $D^s u$ of $D u$ to the set where $u$ is \emph{approximately continuous}:
       \begin{definition}[Approximate continuity]\label{def:appcont}
    	Let $u \in L^1_\loc(\Omega;\R^M)$ and let $x \in \Omega$. We say that $u$ has an \emph{approximate limit} $z \in \R^M$ at $x$ if
    	\[
    	\lim_{r \todown 0} \aveint{B_r(x)}{} |u(y) - z| \dd y = 0.
    	\]
    	The set of points $S_u \subset \Omega$ where this property fails is called the \emph{approximate discontinuity set}. Notice that $J_u \subset S_u$, and Lebesgue's theorem asserts that $\mathscr L^n(S_u) = 0$.
    \end{definition}

Recall that a related decomposition to~\eqref{eq:dec} holds for  $\BD(\Omega)$, which is the space of functions of \emph{bounded deformation}, consisting of all maps $u \in L^1(\Omega;\R^n)$ whose distributional symmetric gradient
    \[
    Eu = \frac 12 \left(\frac{\partial u^j}{\partial x_i} + \frac{\partial u^i}{\partial x_j}\right)_{i,j = 1,\dots,n} 
    \]
    can be represented by a finite Radon measure in $\M(\Omega,(\R^n \otimes \R^n)_\sym)$. 
    Already for the $\BD$-theory there is a drawback in the sense that the \emph{size} of $S_u$ has not yet been fully understood. The sharpest result in this context is due to \textsc{Kohn} \cite{kohn1979new-estimates-f}*{Part~II}, who obtained the capacity estimate
    \begin{equation*}\label{eq:cap_est}
    \mathrm{Cap}_{n-1}(S_u \setminus J_u) = 0,
    \end{equation*}
    where $\mathrm{Cap}_{n-1}$ denotes the Riesz $(n-1)$-capacitary measure (for a definition, see Section~\ref{sec:cap}). This, in particular, implies the dimensional bound    
    $\Hcal^{n-1+\eps}(S_u\setminus J_u) = 0$ for every $\eps > 0$. 
    Concerning the full picture for the properties of functions of bounded deformation, \textsc{Ambrosio, Coscia \& Dal Maso}~\cite{ambrosio1997fine-properties}  further showed that $|Eu|$-almost every point is either an approximate continuity point or an approximate jump point.\\

    The purpose of this work is to extend the classical \emph{fine properties} of $BV$-theory to spaces of functions of bounded $\Acal$-variation, where $\Acal$ is a $k$\textsuperscript{th} order homogeneous linear partial differential operator with constant coefficients between  finite-dimensional (inner product) euclidean spaces $V$ and $W$, of respective dimensions $M$ and $N$. More precisely, we shall consider operators  acting on smooth maps $u \in \Crm^\infty(\Omega;V)$ as
    \begin{equation}\label{eq:B}
    \Acal u \, = \, \sum_{|\alpha| = k} A_\alpha \partial^\alpha u \, \in \, \Crm^\infty(\Omega;W),
    \end{equation}
    where the coefficients $A_\alpha \in W \otimes V^* \cong \mathrm{Lin}(V;W)$ are assumed to be constant. Here, $\alpha \in \N^n_0$ is a multi-index with modulus $|\alpha| = \alpha_1 + \dots + \alpha_n$, and $\partial^\alpha$ denotes the distributional derivative $\partial_1^{\alpha_1}\cdots \partial_n^{\alpha_n}$. Our main structural assumption on $\Acal$ will be that it is a (real-)\emph{elliptic} or \emph{complex-elliptic} operator:\footnote{The nomenclature ``$\Cbb$-elliptic'' or ``complex-elliptic'' was coined in~\cite{breit2017traces}. The concept, however, was introduced by K.T. Smith 
    in~\cites{smith1,smith2} without specific terminology. Smith's definition requires that the complex rank of the tensor $\Abb^k(\xi)$ equals the dimension of its domain  (or equivalently, that $\Abb^k(\xi)$ is injective) for all non-zero frequencies $\xi \in \C^n$. This is equivalent to condition~\eqref{eq:C}, as can be easily shown}. 

    \begin{definition}[$\R$-elliptic and $\C$-elliptic]\label{d:elliptic}
    Let $\Kbb \in \{\Rbb,\Cbb\}$. An operator as above is called $\Kbb$-elliptic when the (principal) symbol map
    \[
    \mathbb \Abb^k(\xi) \, \coloneqq \sum_{|\alpha| = k} \xi^\alpha A_\alpha \, \in \, \mathrm{Lin}(\Kbb \otimes V; \Kbb \otimes W), \quad \xi \in \Kbb^n,
    \]
    is an injective linear map.\footnote{When $k = 1$, we shall simply denote the principal symbol by $\Abb$.} Namely, there exists a positive constant $c$ such that
    \begin{equation}\label{eq:C}
    |\Abb^k(\xi)v| \ge c |\xi|^k|v| \qquad \text{for all $\xi \in \Kbb^n$ and all $v \in \Kbb \otimes V$.}
    \end{equation}
    \end{definition}

    Our goal is to analyze the structure of functions whose distributional $\Acal$-gradient can be represented by a finite Radon measure $\Acal u \in \M(\Omega;W)$. 
    \begin{definition}
    The space $BV^\Acal(\Omega)$ of \emph{functions with bounded $\Acal$-variation on $\Omega$} is defined as
    \[
    BV^\Acal(\Omega) \coloneqq \set{u \in L^1(\Omega;V)}{\Acal u \in \Mcal(\Omega;W)}\,.
    \]
    We endow this space with the norm 
    \[
    \|u\|_{BV^\Acal(\Omega)} \coloneqq \|u\|_{L^1(\Omega)} + |\Acal u|(\Omega)\,,
    \] 
    which gives it a Banach space structure.  
    \end{definition}
    The space $BV^\Acal$ and its properties (under various conditions on $\Acal$) have been studied extensively in recent years; see, for instance~\cites{arroyo2020slicing,breit2017traces,Raita_critical_Lp,RS20,VS2}. The problem of translating the classic theory for gradients to other elliptic operators is highly non-trivial, as first observed by \textsc{Ornstein}~\cite{Ornstein}, who pointed out (in a slightly different language) that the study of the fine properties of $BV^\Acal$-functions cannot be reduced to the study of the fine properties of $BV$-functions (see also~\cite{CFM05} and~\cite{KK11}).

   Similar to~\eqref{eq:dec}, for $u \in BV^\Acal(\Omega)$, we can decompose $\Acal u$ into its absolutely continuous, Cantor, and jump parts:
     \begin{align*}
    \Acal u \, & = \,\Acal^{a} u \, + \, \Acal^s u \\
    & = \, \Acal^{a} u \, + \, \Acal^s u \mres (\Omega \setminus J_u) \, + \, \Acal^s u \mres J_u \\
    & \eqqcolon \, \Acal^a u   \, + \, \Acal^c u \, + \, \Acal^j u \,.
    \end{align*}
    The theory of $L^p$-differentiability for convolution products developed by \textsc{Alberti, Bianchini and Crippa} in~\cite{Alberti_Bianchini_Crippa_14} (see also preceding works~\cites{ambrosio1997fine-properties,APR_Critical,hajlasz_on_appr_diff_bd_96}) guarantees the existence of the $L^p$-derivative $\nabla u$ of a function $u \in BV^{\Acal}$, for a general first-order elliptic operator $\Acal$. Based on this result, \textsc{Gmeineder and  Rai{\c{t}}{\u{a}}} established in~\cite{GR17}*{Lemma~3.1} the almost everywhere approximate differentiability for all such functions $u$. There, it is also shown that
    $\Acal^a u = A(\nabla u) \, \Leb^n$, where $A$ is the linear map that expresses $\Acal u$ in jet form (i.e., $\Acal u = A(Du)$).  The identification of the density of $\Acal^a u$ with $A(\nabla^k u)$ for elliptic operators of order $k>1$, as well as the establishment of higher-order $L^p$-differentiability, missing in the literature, are covered in Section~\ref{sec:higher_order}.
    It is important to note that when $\Acal = D$ (the gradient operator), the Federer-Vol'pert theorem tells us that the Cantor part $\Acal^c u$ is precisely given by $\Acal^s u \mres (\Omega \setminus S_u)$. Similar results hold for the symmetric gradient (\cite{ambrosio1997fine-properties}) and other elliptic operators satisfying one-dimensional slicing representations (see~\cite{arroyo2020slicing}).

         \section{Main results}\label{sec:main} 

   Let us now begin the exposition of our results. We provide only the statements in this section; the proofs appear in the sections thereafter. We characterize the structure of $u \in BV^\Acal(\Omega)$ under assumptions of varying strength in order to determine the sharpness of our results. In this regard, and for the purpose of simplicity, we shall state our main results only for \emph{first order} operators. However, the results presented below also hold for operators of arbitrary order $k \in \Nbb$, with $u$ replaced by its $(k-1)$-st order gradient $\nabla^{k-1} u$, the jump set $J_u$ replaced by $J_{\nabla^{k-1}u}$, and other minor modifications (see Appendix~\ref{sec:h} below for details). 
   
   For our statements, it will be essential to define (following~\cite{kohn1979new-estimates-f}) the set
    \[
    \Theta_u \coloneqq \setBB{x \in \Omega}{\limsup_{r \todown 0} \frac{|\Acal u|(B_r(x))}{r^{n-1}} > 0},
    \] 
    of points with positive \emph{Hausdorff $(n-1)$-dimensional upper density}.
    	We also recall that a Borel set $\Gamma \subset \R^n$ is called  \emph{countably $\Hcal^{m}$-rectifiable} if $\Gamma$ can be covered
    by countably many $(n-1)$-dimensional Lipschitz graphs.

\subsection{The jump part for elliptic operators}\label{ss:jump} We begin with the hypothesis of merely ellipticity. Let us first recall the definition of the \emph{approximate jump}:
\begin{definition}[Approximate jump\label{def:jump}] Let $u \in L^1_\loc(\Omega;\R^M)$.
	We say that a point $x$ is an \emph{approximate jump point} of $u$ (henceforth written as $x \in J_u$) if there exist \emph{distinct} vectors $a,b \in \R^M$ and a direction $\nu \in \Sbb^{n-1}$ satisfying 
	\begin{equation}\label{eq:jumps}
		\begin{cases} \displaystyle
			\lim_{r \todown 0} \aveint{B^+_r(x,\nu)}{} |u(y) - a| \dd y = 0,\\[14pt]
			\displaystyle
			\lim_{r \todown 0} \aveint{B^-_r(x,\nu)}{} |u(y) - b| \dd y = 0.
		\end{cases}
	\end{equation}
	Here, we use the notation
	\[
	B^+_r(x,\nu) \coloneqq \setB{y \in B_r(x)}{\dpr{\nu,y} > 0},\quad B^-_r(x,\nu) \coloneqq \setB{y \in B_r(x)}{\dpr{\nu,y} < 0},
	\]
	for the $\nu$\emph{-oriented half-balls} centred at $x$, where $B_r(x)$ is the open unit ball of radius $r > 0$ and centered at $x$. 
	We refer to $a,b$ as the \emph{(approximate) one-sided limits} of $u$ at $x$ with respect to the orientation $\nu$. 
	Since the jump triplet $(a,b,\nu)$ is well-defined up to a sign in $\nu$ and a permutation of $(a,b)$, we shall write $(u^+,u^-,\nu_u) : J_u \to \R^M \times \R^M \times \Sbb^{n-1}$ to denote the triplet Borel map associated to the jump discontinuities on $J_u$, i.e., 
	\[
	x \in J_u \quad \Leftrightarrow \quad \text{\eqref{eq:jumps} holds with $(a,b,\nu) = (u^+(x),u^-(x),\nu_u(x))$}.
	\]
\end{definition}
We are able to show the following:
   
    \begin{theorem}[Jump part characterization I]\label{lem:jump} Let $\Acal$ be a first-order elliptic operator and let $u\in BV^\Acal(\Omega)$. Then, $J_u \subset \Theta_u$ and there exists a countably $\Hcal^{n-1}$-rectifiable set $G_u \subset J_u$ (with orientation $\nu_u$) satisfying $$\Hcal^{n-1}(J_u \setminus G_u) = 0$$ and such that
        \[
        \Acal^s u \mres G_u = \Abb(\nu_u)[u^+ - u^-]\, \Hcal^{n-1}\mres G_u.
        \]
    \end{theorem}
    \begin{remark}\label{rem:Giacomo}
    Following the publication of the initial version of this paper, \textsc{Del Nin}   proved in~\cite{del2020rectifiability} the remarkable fact that for $u \in L^1_\loc(\Omega)$, the jump set $J_u$ is itself countably $\Hcal^{n-1}$-rectifiable. 
    \end{remark}
    Motivated by this result and the countable rectifiability of the jump set for locally integrable functions, we ask the following question:
    \begin{open}\label{op:Nin}
      Does $|\Acal u|(J_u \setminus G_u) = 0$ for all $u \in BV^\Acal(\Omega)$ and all first-order elliptic operators $\Acal$?
    \end{open}
    
  \subsection{Characterization of traces for complex elliptic operators}\label{ss:traces}
   The notion of complex ellipticity originated with the work of \textsc{Aronszajn}~\cite{aron} and \textsc{Smith}~\cites{smith1,smith2}. Smith established it as a \emph{necessary and sufficient} condition for the validity of the coercive $L^p$-inequality 
  \[
  	\|u\|_{W^{1,p}(\Omega)} \lesssim \|u\|_{L^p(\Omega)} + \| \Acal u \|_{L^p(\Omega)} 
  \]
  on Lipschitz domains and for exponents $p$ in the range $(1,\infty)$. Smith demonstrated that complex ellipticity is equivalent to requiring the distributional kernel of the operator to be a finite-dimensional subspace of polynomials. This simple characterization allows one to verify that the gradient, the Hessian, and the symmetric gradient all belong to the class of complex elliptic operators. On the other hand, very well-studied operators, such as the Cauchy-Riemann equations or the Laplacian, are not complex-elliptic.    Recently, \textsc{Breit, Diening} and \textsc{Gmeineder}~\cite{breit2017traces}  have shown that when $\Acal$ is a first-order operator, complex-ellipticity is also \emph{a sufficient and necessary} condition for the existence of an exterior trace operator in $BV^\Acal(\Omega)$. Lastly, let us recall that \textsc{Gmeineider} and \textsc{Raita} ~\cite{GR17} showed that, for first-order operators, complex-ellipticity of $\Acal$ implies the critical embedding $BV^\Acal(\Omega) \embed L^{(n-1)/n}(\Omega;V)$. 
While the fine properties for functions lying in $BV$ and $BD$ spaces are by now well-understood, the general picture concerning the fine properties of $BV^\Acal$-functions is rather incomplete. This article provides a step towards gaining a better insight into some of these properties. 
  
 To begin the exposition of our results, let us recall the background theory concerning the existence of functional traces established in~\cite{breit2017traces}. There, the authors showed that all $n$-dimensional Borel subsets $U \subset \Omega$ with Lipschitz boundary $\partial U$ possess a continuous (exterior) linear trace operator $\tr_U : BV^\Acal(U) \to L^1(\partial U;V)$ satisfying 
    \begin{equation}\label{eq:extension}
    \tr_U(u) = u|_{\partial U}  \qquad \forall u \in BV^\Acal(U) \cap \Crm(\cl U;V),
    \end{equation}
    which in turn gives the characterization
    \begin{equation}\label{eq:GG}
    \Acal u \mres \partial U = \Abb(\nu)[\tr_{U}(u) - \tr_{\Omega \setminus U}(u)] \, \Hcal^{n-1} \mres \partial U,
    \end{equation}
    where $\nu : \partial U \to \Sbb^{n-1}$ is the outer normal to $\partial U$ (defined $\Hcal^{n-1}$-a.e.), and $\Abb$ is the symbol of the operator $\Acal$ as in Definition \ref{d:elliptic} (with $k=1$).
    
	Our main contribution to understanding the functional trace operator hinges heavily on the following result, where we establish that all functions of bounded $\Acal$-variation possess one-sided $L^{{n}/{(n-1)}}$-approximate limits when restricted to countably rectifiable sets: 
    
    \begin{theorem}[One-sided limits on interior rectifiable sets]\label{thm:twoside} Let $\Acal$ be a first-order complex-elliptic operator, let $u$ be a function in $ BV^\Acal(\Omega)$, and let $\Gamma \subset \Omega$ be a countably $\Hcal^{n-1}$-rectifiable set oriented by $\nu$. There exist Borel maps $u_\Gamma^+, u_\Gamma^- : \Gamma \to V$ satisfying 
               \begin{equation} \label{eq:6}
        \begin{cases} \displaystyle
        \lim_{r \todown 0} \aveint{B^+_r(x,\nu(x))}{} |u - u_\Gamma^+(x)|^\frac{n}{n-1} \dd y = 0\\[14pt]
        \displaystyle
        \lim_{r \todown 0} \aveint{B^-_r(x,\nu(x))}{} |u - u_\Gamma^-(x)|^\frac{n}{n-1} \dd y = 0\\[14pt]
        \end{cases}
              \end{equation}
	for $\Hcal^{n-1}$-a.e. $x$ in $\Gamma$. 
	Moreover, 
	\[
		\|u^+_\Gamma - u^-_\Gamma\|_{L^1(\Gamma)} \le C |\Acal u|(\Omega)
	\] 
	for some constant $C$ depending only on $\Abb$.
        \end{theorem} 
    
In summary, there is only one type of discontinuity of $u$ when restricted to any countably $\Hcal^{n-1}$-rectifiable set $\Gamma$. Namely, either $u$ is $L^{n/(n-1)}$-approximately continuous (this is the case $u^+_\Gamma = u^-_\Gamma$) or $u$ has an $L^{n/(n-1)}$ jump-type discontinuity across the orientation of $\Gamma$. 
   Theorem \ref{thm:twoside} implies that the exterior trace operator $\tr_U$ from~\cite{breit2017traces} can be pointwise characterized by the pointwise \emph{measure-theoretic trace}, thus extending the representation~\eqref{eq:extension} to all functions of bounded $\Acal$-variation:
        \begin{corollary}[Characterization of the exterior trace]\label{exterior}
		Let $\Omega \subset \R^n$ be a bounded open set with Lipschitz boundary and let $u \in BV^\Acal(\Omega)$. Then, for $\Hcal^{n-1}$-a.e. $x \in \partial \Omega$
		\[
			\lim_{\rho \todown 0} \aveint{B_r(x) \cap \Omega}{} |u(y) - \tr_\Omega(x)|^{\frac{n}{n-1}} \, dy = 0\,.
		\]
	\end{corollary}

        Another direct consequence of the existence of approximate one-sided limits is the following characterization of the precise representative:
        \[
        u^\star = \frac{u^+_\Gamma + u^-_\Gamma}{2} \qquad \text{$\Hcal^{n-1}$ a.e. on $\Gamma$}\,,
        \]
	where we recall that the precise representative $u^\star$ of $u$ is the Borel map defined as
	    	\[
	    	u^\star(x) \coloneqq 
	    	\begin{cases} \displaystyle
		    		\lim_{r \todown 0} \aveint{B_r(x)}{} u(y)  \dd y & \text{if the limit exists\,,}\\[12pt]
		    		\displaystyle
		    		0 &\text{else\,,}
		    	\end{cases}
	    	\]
which, by Lebesgue's continuity theorem, is integrable and belongs to the same $L^1(\Omega)$-equivalence class as $u$.
	This further conveys an \textbf{explicit representation} of the existing (see~\cite{VS2}*{\S6}) continuous interior trace operator on Lipschitz hypersurfaces:
        \begin{corollary}[Interior trace]\label{interior} Let $\Omega \subset \R^n$ be an open set and let $M\subset \Omega$ be a Lipschitz hypersurface. Then, the Borel map
        \[
			\Tr_M u(x) \coloneqq  u^\star(x) \,, \qquad x \in M\,,
        \]
        defines surjective linear operator $\Tr_M : BV^\Acal(\Omega) \longrightarrow L^1(M;V)$ satisfying
        \[
        		\|\Tr_M u\|_{L^1(M)} \le C\|u\|_{BV^\Acal(\Omega)}
        \]
        for some constant depending on $\Abb$ and $M$. 
        \end{corollary}
        \begin{remark} While Theorem~\ref{thm:twoside} establishes summability of $u^+_\Gamma - u^+_\Gamma$ on arbitrary rectifiable sets $\Gamma$, the current corollary requires Lipschitz surfaces. This is a necessary restriction: the function $u^\star$ may not be integrable on general countably $\Hcal^{n-1}$-rectifiable sets, even in dimension one.
\end{remark}

\subsection{Characterization of the jump part for complex-elliptic operators}\label{ss:jump-C} Thanks to the characterization of the one-sided traces on rectifiable sets, we are able to fully characterize the density of $\Acal u$ when restricted to countably rectifiable sets, and, in particular, the jump part  $\Acal^j u = \Acal^s u\mres J_u$. The precise statement is the following:
	
 \begin{corollary}[Jump part II]\label{thm:jump2} Let $\Acal$ be a first-order complex-elliptic operator, let $u$ be a function in $ BV^\Acal(\Omega)$, and let $\Gamma \subset \Omega$ be a countably $\Hcal^{n-1}$-rectifiable set oriented by $\nu$. Then,
\begin{enumerate}[label={\arabic*.}]
\item The restriction of $\Acal u$ to $\Gamma$ is the $\Hcal^{n-1}$-rectifiable measure 
	\[
        \Acal u \mres \Gamma \, = \, \Abb(\nu)[u^+_\Gamma - u^-_\Gamma] \, \Hcal^{n-1} \mres (\Gamma \cap J_u).
        \]
        In particular, 
        \[
        (u^+_\Gamma,u^-_\Gamma,\nu) = (u^+,u^-,\nu_u) \quad \text{$\Hcal^{n-1}$ almost everywhere on $\Gamma \cap J_u$\,.}
        \]
        
        \item  The jump part of $\Acal u$ is an $\Hcal^{n-1}$ rectifiable measure, and it is given by
         \[
          \Acal^j u = \, \Abb(\nu_u)[u^+ - u^-] \, \Hcal^{n-1} \mres J_u.
        \]
        \item The set $S_u \setminus J_u$ is $\Hcal^{n-1}$ purely unrectifiable, i.e., 
        \[
        		\Hcal^{n-1}(M \cap (S_u \setminus J_u)) = 0 
        \] 
        for all Lipschitz hypersurfaces $M \subset \Omega$.
        \end{enumerate}
\end{corollary}

   \subsection{Approximate continuity properties}\label{ss:app-cty} Lastly, we discuss the \enquote{size} of the set where a $BV^\Acal$-function is approximately discontinuous but does not have a jump discontinuity.

    In the classical $BV$-theory, every point $x \notin J_u$ is an approximately continuous point, except for an $\Hcal^{n-1}$-negligible set. The proof, however, hinges heavily on using the co-area formula for classical BV functions and the theory of sets of finite perimeter. Already for the space $\BD$, where strong slicing techniques exist, this property remains uncertain and belongs to a longstanding conjecture regarding the size of the set $S_u \setminus \Theta_u$. More precisely, it has been conjectured that
    \[
    S_u \setminus \Theta_u \quad \text{is $\sigma$-finite with respect to $\Hcal^{n-1}$.}
    \] 
    \textsc{Kohn} showed in his Ph.D. 
    Thesis~\cite{kohn1979new-estimates-f}*{Part~II and~Thm.~5.15} that this is a critical result, in the sense that $\mathrm{Cap}_{n-1}(S_u \setminus\Theta_u) = 0$. On the other hand, using the strong symmetries of the symmetric gradient operator, \textsc{Ambrosio et al.}~\cite{ambrosio1997fine-properties} have shown through a slicing argument that the set $S_u \setminus\Theta_u$ is $|Eu|$-negligible. Recently, in~\cite{arroyo2020slicing}, the first author has shown that $|\Acal u|(S_u \setminus \Theta_u) = 0$ for all elliptic operators satisfying one-dimensional slicing representations. This, however, is a strict subclass of the class of complex-elliptic operators. 
    
    To commence the discussion about the continuity properties of $BV^\Acal$-functions, we begin by recalling the following Poincar\'e inequality for complex-elliptic operators (established in~\cite{GR17}; see Section~\ref{sec:properties} for more details): There exists a positive integer $\ell = \ell(\Acal)$, a $V$-valued polynomial of $p_{x,r}u$ degree at most $\ell - 1$, and a positive constant $c = c(n,\Acal)$ such that         \begin{equation}\label{eq:TSineq} 
        \|u - p_{x,r}u\|_{L^{n/(n-1)}(B_r(x))} \le c \, |\Acal u|(B_r(x)),
        \quad \text{whenever $B_r(x) \subset \Omega$.}
        \end{equation}
We also introduce the short-hand notation
    \[
        		\theta^{*(n-1)}(\Acal u,x) \coloneqq \limsup_{r \todown 0} \frac{|\Acal u|(B_r(x))}{r^{n-1}}\,.
    \]
for the pointwise upper $(n-1)$-dimensional Hausdorff density of $\Acal u$.
Based on the Poincar\'e estimate~\eqref{eq:TSineq} and an idea of \textsc{Kohn}, we give a remarkably simple proof of following quantitative scale-dependent continuity for complex-elliptic operators:

    \begin{theorem}[Quantitative approximate continuity]\label{prop:capacity} Let $\Acal$ be a first-order complex-elliptic operator, let $u$ be a function in $BV^\Acal(\Omega)$, and write 
        \[
        p_{x,r}u(y) = \sum_{\substack{\beta \in \Nbb^n_0\\|\beta| \le \ell - 1}} a_\beta(x,r) \ y^\beta, \qquad a_\beta(x,r) \in V.
        \]
        to denote the coefficient decomposition of $p_{x,r} u$, the polynomials from~\eqref{eq:TSineq}.
       Then,         \begin{equation}\label{eq:quasihot}
        \limsup_{r \todown 0} \, r^{|\beta|} \ |a_\beta(x,r)| \lesssim\theta^{*(n-1)}(\Acal u,x) \qquad \text{ for all $1 \le |\beta|\le \ell - 1$} 
        \end{equation}
 	and 
        \begin{equation}\label{eq:quasicty}
        \limsup_{r \todown 0} \left\{\inf_{z \in V}  \aveint{B_r(x)}{} |u(y) - z|^\frac{n}{n-1}  \dd y \right\} \lesssim \theta^{*(n-1)}(\Acal u,x).
        \end{equation}
               In particular, the limits in~\eqref{eq:quasihot}-\eqref{eq:quasicty} exist and equal zero for any point $x \in \Theta_u^\complement$.
    \end{theorem}

    The quantitative \emph{scale-dependent} continuity~\eqref{eq:quasicty} will serve as a stepping-stone towards the following capacitary estimate on the approximate discontinuity set; see also~\cite{diening2019continuity}, where a slightly weaker version ($L^1$-approximate continuity) was established using a different approach):
    \begin{corollary}\label{thm:cap_est}  Let $\Acal$ be a first-order order complex-elliptic operator and assume that $x \in \Omega$ is a point for which
        \[
        \int_{B_r(x)} \frac{ \dd |\Acal u|(y)}{|x - y|^{n-1}}  < \infty \qquad \text{for some $0 < r <  \dist(x,\partial\Omega)$}.
        \]    
        Then $u$ is $L^{n/(n-1)}$-approximately continuous at $x$, that is,
        \[
        \limsup_{r \todown 0}  \aveint{B_r(x)}{} |u(y) - u^\star(x)|^\frac{n}{n-1}  \dd y = 0,
        \]
        where $u^\star$ is the precise representative of $u$.
         \end{corollary}
        By standard geometric measure theory results, this yields a capacitary estimate for the approximate discontinuity set:
    
    \begin{corollary}[Dimension of the discontinuity set]\label{cor:cap}
        Let $\Acal$ be a first-order order complex-elliptic operator and let $u$ be a function in $BV^\Acal(\Omega)$. Then, we have the following Riesz capacity estimate: 
        \begin{equation*}\label{eqn:capacityest}
        \mathrm{Cap}_{n-1}(S_u) = 0.
        \end{equation*}
        Here, the $s$-dimensional Riesz capacity of a set $E \subset \Omega$ is defined as
        \begin{align*}
        \mathrm{Cap}_{s}(E) \coloneqq \sup_{\substack{\mu \in \Mcal^1(\R^n)\\  \supp (\mu) \subset E}}  \Bigg\{\bigg(\int \int \frac{\dd \mu(y)}{|y - x|^{n-s}}\dd \mu(x)\bigg)^{-1}\Bigg\}.
        \end{align*}
                These size estimates are critical in the sense that 
        \[
        		\mathrm{Cap}_s(E) < \infty \quad \Longrightarrow \quad \Hcal^{s + \eps}(E) = 0 \quad \forall \eps > 0\,.
        \]
    \end{corollary}
    Motivated by the cases $\Acal=D$ and $\Acal=E$, we conjecture there is $|\Acal u|$-essentially only one type of discontinuity, namely jump-type discontinuities:
    \begin{conjecture}
    	Let $\Acal$ be a first-order order complex-elliptic operator and let $u$ be a function in $BV^\Acal(\Omega)$. Then
	\[
		|\Acal u|(S_u \setminus J_u) = 0\,.
	\]
    \end{conjecture}
The proof of this conjecture can be restated in terms of $\Acal^c u$ and its relation with the approximate discontinuity set: One would like to show that $u$ is approximately continuous $|\Acal^c u|$-almost everywhere. It is worth mentioning that aside from the $BV$ theory, where it is known the Cantor part of $Du$ sits on a superposition of $(n-1)$-rectifiable sets, very little is known about the structure of the Cantor part $E^c u$ for functions of bounded deformation. Even less is currently known for $\Acal^c u$ when $u$ is a function of bounded $\Acal$-variation; understanding this part of the measure $\Acal u$ is, in general, a deep and difficult problem.

\subsection{Statements for higher-order operators}\label{sec:higher_order} This section extends the statements from the previous section to encompass higher-order operators. We address the identification of the density of $\Acal^a u$ with $A^k(\nabla^k u)$ through the framework of higher-order $L^p$-differentiability for elliptic operators of arbitrary order, a topic not yet fully explored in the literature. Additionally, regarding the study of jump-type discontinuities, we present a rigorous argument demonstrating that for complex-elliptic operators, only the jumps of the immediate lower-order derivatives need to be considered.\\

\noindent \textbf{Notation.} Throughout this section, we fix $k \ge 2$ as a positive integer and consider a $k$th-order elliptic operator $\Acal$ on $\R^n$, acting from a space $V$ to a space $W$.

       Let $r \ge 0$ be an integer. For a vector $e \in \R^n$, we write $e^{\otimes ^r}$ to denote the $r$-fold tensor that results by taking the tensorial product of $e$ with itself $r$ times (with the convention $e^{\otimes^0} = 1$).  Notice that
\begin{align*}
	E_r(\R^n) & = \spn \set{e^{\otimes^r}}{e\in\R^n}. 
\end{align*}
 We shall consider a linear contraction $\dpr{\frarg\,,\frarg}_r :  V \otimes E_r(\R^n) \times E_r(\R^n) \to V$ defined (with the Frobenius inner product $(\frarg \, : \, \frarg)$ on $\otimes^r \R^n$) as
\[
	\dpr{v \otimes M,e}_r = v (M : e^{\otimes^r}) \qquad \text{for all $v \in V$, $M \in E_r(\R^n)$ and  $e\in\R^n$.}
\]
Notice that under our convention for $r=0$, we have $\dpr{v,\frarg}_0 = v$. Since it will be instrumental for our computations, we will often use the alternative \emph{jet-form} expression  $A^k(D^k u) = \Acal u$, where $A^k : V \otimes E_k(\R^n) \to W$ is the (unique) linear map satisfying 
	\begin{equation}\label{eq:1111}
		A^k[v \otimes \xi^{\otimes^k} ] = \Abb^k(\xi)[v] \qquad \text{for all $v \in V$\,.}
	\end{equation}
 The existence of $A^k$ is a direct consequence of the universal property of the tensor product and the $k$-linearity of the principal symbol on the frequency variable. 
    
\subsection*{Order-reduction relations} Henceforth  we write $\Bcal$ to denote the first-order elliptic operator associated to $\mathcal A$, as defined in Theorem~\ref{1.1}.  

By classical elliptic regularity theory, there exists a natural continuous embedding $BV^\Acal_\loc \embed W^{k-1,p}_\loc$ for all $1 \le p < n/(n-1)$. If $u \in BV^\Acal_\loc$, we may then identify $D^{k-1}u$ by integration with a locally $p$-integrable map. 
Moreover, 
in this case
\[
        u \in BV^\Acal_\loc(\Omega) \qquad\Longleftrightarrow\qquad D^{k-1} u \in BV^\Bcal_\loc(\Omega)\,.
\]

    \subsection{$L^p$-differentiability} Following Ziemer~\cite{Ziemer}*{Section 3} 
    we say that a map $u$ defined on $\Omega \subset \R^n$ is $L^p$-differentiable of order $k$ at a point $x \in \Omega$ if it can be decomposed as
    \[
        u(x + h) = P_x^k(h) + R_x^k(h)\qquad \text{for all sufficiently small $h \in \R^n$\,,}
    \]
    where $P_x^k$ is a polynomial of degree at most $k$ and the reminder $R_x^k$ satisfies
    \[
        \left( \frac{1}{r^n}\int_{B_r(0)} |R_x^k(h)|^p \, dh \right)^\frac1p = \SmallO(r^k)\,.
    \]
    In this case, we define the $k$th order $L^p$-derivative $\nabla^k u(x)$ of $u$ at $x$ as the classical $k$th-order derivative of the polynomial $P_x^k$ evaluated at zero, i.e., $D^k P^k_x(0)$. 
\begin{remark}
    Every $u \in W^{k,p}_\loc$ is almost everywhere $L^p$-differentiable of order $k$. Moreover, the distributional gradient $D^k u$ can be identified by integration with the measure $\nabla^k u \, \Leb^n$ (see~\cite{Ziemer}*{Theorems~3.4.1 and~3.4.2}).  
\end{remark}
    Let $1 \le p < \infty$ and set  
    \[
        \sigma(k) \coloneqq \begin{cases}
            {n}/({n-k}) & \text{if $k < n$} \\
            +\infty & \text{if $k \ge n$}
        \end{cases}\,.
    \]
   The next lemma generalizes~\cite{GR17}*{Theorem~1.1 and Lemma~3.1} to operators of arbitrary order:
    
    \begin{lemma}[$L^p$-differentiability]\label{lem:diff} Let $\Acal$ be a $k$th-order elliptic operator. If $u$ is a map in $BV^\Acal(\Omega)$, then\vskip0.5em
        \begin{enumerate}[label=(\roman*),itemsep=0.5em]
        \item The map $u$ is $L^p$-differentiable of order $k$ for every $p$ with $1 \le p < \sigma(k)$ and $\Leb^n$-almost every $x \in \Omega$. 
        \item Denoting by $A^k$ the linear map associated with the jet expression $A^k(D^k u) = \Acal u$, we have
            \[
                    \frac{d\Acal u}{d\Leb^n}(x)  = A^k(\nabla^k u(x)) \qquad \text{for $\Leb^n$-almost every $x \in \Omega$\,.}
            \]
        \item If moreover, $k < n$ and $\Acal$ is $\C$-elliptic, then $u$ is $L^{n/(n-k)}$-differentiable of order $k$, for $\Leb^n$-almost every point in $\Omega$.  
        \end{enumerate}
    \end{lemma}

    \subsection{Jumps of higher-order gradients} This section aims to characterize the jump part of $\Acal u$. However, unlike the first-order case, we must first clarify what we mean by the jump part of $\Acal u$ for a higher-order operator. The fact that every $u \in BV^\Acal_\loc$
    belongs to $W^{k-1,1}_\loc$ suggests that only the jump points of $\nabla^{k-1} u$ are relevant for understanding the measure $\Acal u$. The next proposition shows that this is precisely the case, provided that $\Acal$ is complex-elliptic.

    \begin{proposition}\label{prop:jump} Let $\Acal$ be a $\C$-elliptic operator of order $k \ge 2$. If $u \in BV^\Acal(\Omega)$, then
        \[
        |\Acal u|(J_{\nabla^r u}) = 0 \quad \text{for all integers $1 \le r < k-1$}\,.
        \]
    \end{proposition}
    The previous proposition establishes that for a $\C$-elliptic operator of order $k$, the $|\Acal u|$-measure of the set of jump points of any lower-order derivative of $u$ is zero, when $u$ belongs to the space $BV^\Acal(\Omega)$. A natural question arises:
    \begin{open}
        Does the assertion of Proposition \ref{prop:jump} hold for general elliptic operators (not necessarily complex-elliptic)? Compare this with Theorem~\ref{lem:jump} and Open problem~\ref{op:Nin}.
    \end{open}
    
    The previous proposition motivates the following definition:
    \begin{definition}
        Let $\Acal$ be a $k$th order $\C$-elliptic operator and let $u \in BV^\Acal(\Omega)$. We define the \emph{jump part} of the measure $\Acal u$ as 
        \[
            \Acal^j u \coloneqq \Acal \mres J_{\nabla^{k-1} u}\,.
        \]
        Notice that with this definition it holds $\Acal^j u = \Bcal^j U$. 
    \end{definition}

   We now turn to the identification of the jump part as per the previous definition. The compatibility conditions for Sobolev maps across a hyperplane say that $(A,B,\nu)$ is an admissible jump triple of order $r$ if and only if the tensors $A, B \in E_{r}(\R^n)$ are rank-one connected in the direction of $\nu$, as $E_{r-1}(\R^n)\otimes \R^n$ tensors. Due to the symmetries of the elements in $E_{r}(\R^n)$, this is equivalent to requiring that
    \begin{equation}\label{eq:rk}
    A - B  = \dpr{A-B, \nu^{\otimes^{r}}}_{k-1} \otimes \nu^{\otimes^{r}} \quad \text{for some $v \in V$.}
    \end{equation}
    This analysis has the following direct implication: 
    \begin{lemma}\label{lem:jump2}
         Let $\Acal$ be a  $\C$-elliptic operator of order $k \ge 2$. If $u \in BV^\Acal(\Omega)$, then   
         \[
                \Acal^j u = \Abb^k(\nu) \dpr{U^+ - U^-,\nu^{\otimes^{k-1}}}_{r-1} \,\Hcal^{n-1} \mres J_{U}\,, \qquad U = \nabla^{k-1} u\,,
         \]
         where $\nu$ is an orientation of $J_U$. 
    \end{lemma}

    \subsection{Related statements} It is easy to verify by means of Theorem~\ref{1.1}  that the statements contained in Sections~\ref{ss:traces}-\ref{ss:app-cty} also hold for complex-elliptic operators of arbitrary order, with the following minor modifications:\vskip0.5em
    \begin{enumerate}[label={(\roman*)},itemsep=0.3em]
        \item $u$ is replaced by $U = \nabla^{k-1} u$, 
        \item $J_u$ is replaced by $J_U$, and
        \item $u^+ - u^-$ is replaced by $\dpr{U^+ - U^-,\nu^{\otimes^{k-1}}}_{k-1}$.
    \end{enumerate}

    \subsection{Other properties of complex-elliptic operators}\label{ss:other} We conclude the exposition of our results with a few properties of complex-elliptic operators that are of interest in their own right. We defer the proofs to Section \ref{s:other-prop}. 
    
    \subsubsection{Removable singularities}The following result  tells us that complex-ellipticity implies that certain lower dimensional singularities are removable:

\begin{proposition}[Removable singularities]\label{prop:equiv}
        Let $n \ge 2$ and let $\Acal$ as in \eqref{eq:B} be a complex-elliptic operator.  If $K \subset \Omega$ is a closed set with $\Lcal^n(K) = 0$ and   $(\Omega \setminus K)$ is connected, then every  solution $u \in L^1_\loc(\Omega)$  to
        \[
        \Acal u = 0 \quad \text{in the sense of distributions on  $\Omega \setminus K$},
        \]
        is also a classical solution of
        \[
        \Acal u = 0 \quad \text{on $\Omega$.}
        \]
    \end{proposition}
    \begin{remark} A direct consequence of Proposition \ref{prop:equiv} is a new and elementary proof of the following properties, which have already been shown in~\cite{VS2}:
        \begin{enumerate}[label={\arabic*.}]
            \item If $\pi\le \R^n$ is a two-dimensional subspace and $\lambda \in V$, then the equation
            \[
            \Acal u = \lambda \, \Hcal^{n-2} \mres \pi  
            \]
            has no solution in the space of tempered distributions $\Scal'(\R^n;V)$. 
            
            \item $\Acal$ is canceling on $2$-planes, that is,
            \begin{equation*}
            \bigcap_{\xi \in \pi} \im \Abb^k(\xi) = \{0\} \qquad \text{for all $\pi \in \Gr(2,n)$.}
            \end{equation*}
        \end{enumerate}
    \end{remark}

    \subsubsection{Minimal number of equations for a complex-elliptic operator}Another interesting question concerning the algebraic structure of operators satisfying a mixing condition has been discussed in~\cite{van-schaftingen2013limiting-sobole}*{Eqn.~6.1} regarding an open question by \textsc{Bourgain} and \textsc{Brezis} (see Open problem 3 in~\cite{BB2}). There, it is shown that if $\Acal$ is a first-order elliptic operator in $\R^n$-variables, from $V$ to $W$, and $\Acal$ is canceling: 
    \[
    \bigcap_{\xi \in \R^n} \im \Abb(\xi) = \{0_W\},
    \]
    then 
    \[
        \max\{n,\dim(V)+1\} \le \dim(W) \le \dim(V) + n - 1\,.
    \]
    We demonstrate that this suggested upper bound is a \emph{sharp} lower bound for the number of equations of a first-order complex-elliptic system on $n$ variables: 
    \begin{proposition}\label{prop:alg}
        Let $\Acal$ be a first-order elliptic operator in $\R^n$-variables, from a space $V$ to a space $W$.
        Then
        \[
        \dim(W) \ge \dim(V) + n -1.
        \]
        Moreover, this bound is sharp in the following sense: there exists a complex-elliptic operator on $\R^n$, from $\R^M$ to $\R^{M + n - 1}$.
    \end{proposition}

\subsection{Structure of paper}
    Section \ref{sct:prelim} below is dedicated to the preliminaries required for the proofs of the main results. Then, Section \ref{s:str-proofs} contains the proofs of the main structural results stated in Sections \ref{ss:jump}, \ref{ss:traces} and \ref{ss:jump-C}. We prove the approximate continuity results stated in Section \ref{ss:app-cty} in Section \ref{sct:Cantor_dim_est}. In Section \ref{s:other-prop}, we provide the proofs of the additional properties stated in Section \ref{ss:other}. We conclude with some examples in Section \ref{sct:ex}.

    \subsection*{Acknowledgments} We would like to thank Eduardo Simental and Giacomo del Nin to whom we are especially indebted for their numerous comments. We are also very grateful to Robert Kohn for sending us a copy of his thesis, which has been a crucial source of information for us.

    \section{Preliminaries}\label{sct:prelim}
       We begin this section by a recollection of some basic definitions and notation.
    
    \subsection{General notation}
    Recall that our spaces $V$ and $W$ are finite dimensional. We will let $M$,$N$ denote the dimensions of $V$ and $W$ respectively. We fix a 
    basis $\{e_1,...,e_M\}$ of $V$ and take $X$ to be a finite dimensional Euclidean space.

    \subsection{Basic geometric measure theory and functional analysis} 
        By the Riesz Representation Theorem, the space $\Mcal(\Omega;X)$ is identified with the dual space of $\Crm_0(\Omega;X)$, and the duality pairing is realized
        via integration as follows:
    \[
    \langle \mu, \phi \rangle := \int_\Omega \phi \dd\mu, \qquad \mu \in \Mcal(\Omega;X), \ \phi \in \Crm_0(\Omega;X).
    \]
    We hence naturally endow the space $\Mcal(\Omega;X)$ with the weak-$*$ topology. 
   
    \subsubsection{Tangent measures} For a given Radon measure $\mu \in \Mcal(\Omega;X)$, any given point $x \in \Omega$ and any $r > 0$, we define the
        \emph{re-scaled} push-forward measures $\Trm_{x,r}[\mu]$ to be the measures given by
    \[
    \Trm_{x,r}[\mu](B) \coloneqq \mu(x + rB), \quad B \in \Bfrak(\R^n),
    \]
    where $\Bfrak(\R^n)$ is the Borel $\sigma$-algebra of subsets of $\R^n$. 
    Notice that since $\Omega$ is open, this is well-defined on any set $B \in \Bfrak(\R^n)$ for $r$ sufficiently small. Following the definition of \textsc{Preiss}, we recall that a \emph{tangent measure} of $\mu$ at $x$ is a \emph{non-zero} measure $\nu \in \Mcal_\loc(\R^n;X)$ for which there exist  sequences of positive numbers $c_j$ and positive radii $r_j \searrow 0$ such that
    \[
    c_j \Trm_{x,r_j} \mu \toweakstar \nu \quad \text{in} \ \Mcal(\R^n;X).
    \]
    The space of tangent measures of $\mu$ at $x$ is denoted by $\Tan(\mu,x)$.  A fundamental result of \textsc{Preiss} \cite{preiss1987geometry-of-mea}*{Theorem~2.5} is that that $\Tan(\mu,x) \neq \varnothing$ for $|\mu|$-almost every $x \in \Omega$.
    To simplify terminology, we will use the standard notation
    \[
    \theta^{*s}(\mu, x) \coloneqq \limsup_{r \todown 0} \frac{\mu(B_r(x))}{r^s}, \qquad \theta_*^s(\mu, x) \coloneqq \liminf_{r \todown 0} \frac{\mu(B_r(x))}{r^s}
    \]
    for the respective $s$-dimensional upper and lower densities of a non-negative Radon measure $\mu$ at $x$. For any additional basic measure-theoretic notions, we refer the reader to~\cite{mattila1995geometry-of-set}, for example.

    \subsubsection{Riesz $s$-capacity}\label{sec:cap} In section \ref{sct:Cantor_dim_est}, we will also be using some basic potential theory. We formally define the \emph{$s$-Riesz potential}, $s > 0$, of a positive real-valued measure $\mu \in \Mcal^+(\Omega)$ by
    \[
    I_s(\mu)(x) := \int_\Omega \frac{1}{|x-y|^{n-s}}\dd\mu(y), \qquad x \in \Omega.
    \]
    Of course, there is no reason why this should be finite at any given point $x$. 
    Moreover, recall that for $s > 0$, the \emph{Riesz $s$-capacity} of a set $E \subset \R^n$ is defined by
       \begin{align*}\label{def:cap}
    \mathrm{Cap}_{s}(E) & \coloneqq \sup \set{\bigg(\int I_{n-s}(\mu)(x)\dd \mu(x)\bigg)^{-1}}{\ \mu \in \Mcal^1(\R^n), \ \supp \mu \Subset E.}
    \end{align*}
    If $E \in \Bfrak(\R^n)$ with $\mathrm{Cap}_s(E) = 0$, then $\Hcal^t(E) = 0$ for all $s < t < \infty$; For this and other facts see~\cite{mattila1995geometry-of-set}*{Chapter~8}). However, by \cite{Federer1972_slices_and_potentials}*{Section~2(5)}, we have
    \begin{equation}\label{eq:capacity_badset}
    \mathrm{Cap}_{n-s}\Big(\{x \in \Omega : I_s(\mu)(x) = \infty\} \Big) = 0.
    \end{equation}

    The following lemma provides an elementary estimate for the localized $m$-Riesz potential of a positive Radon measure $\mu$ on annuli, weighted with an appropriate scaling. The proof is identical to that in~\cite{kohn1979new-estimates-f}*{Lem.~5.11}. 
    \begin{lemma}\label{lem:Kohnpotential}
        Let $\mu \in \Mcal^+(\Omega)$ and suppose that $B_1 \subset \Omega$. Then, for any $m \in \N$ it holds that
        \[
        \limsup_{r\todown 0} r^m\int_{B_1\setminus B_r} \frac{1}{|y|^{n-1+m}} \dd \mu(y) \lesssim_{n,m} \theta^{*(n-1)}(\mu,0).
        \]
    \end{lemma}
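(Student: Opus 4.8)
The plan is to dyadically decompose the annular region $B_1 \setminus B_r$ (here $B_\rho = B_\rho(0)$) and estimate the integral shell by shell using the upper density hypothesis, taking particular care of the finitely many coarse shells at scales where the density bound is not yet available.

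\textbf{Setup.} Write $\theta := \theta^{*(n-1)}(\mu,0)$ and fix an arbitrary $\theta' > \theta$. By definition of the upper density there is $\rho_0 \in (0,1)$ such that $\mu(B_\rho) \le \theta' \rho^{n-1}$ for every $\rho \le \rho_0$. Given $r$ small, let $N = N(r)$ be the integer with $2^{-N-1} \le r < 2^{-N}$, and set $A_j := B_{2^{-j}} \setminus B_{2^{-j-1}}$. Since $B_r \supseteq B_{2^{-N-1}}$, we have the covering $B_1 \setminus B_r \subseteq \bigcup_{j=0}^{N} A_j$. On $A_j$ one has $2^{-j-1} \le |y| \le 2^{-j}$, so $|y|^{-(n-1+m)} \le 2^{n-1+m}\, 2^{j(n-1+m)}$, hence $\int_{A_j} |y|^{-(n-1+m)}\dd\mu(y) \le 2^{n-1+m}\, 2^{j(n-1+m)}\, \mu(B_{2^{-j}})$.

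\textbf{Splitting the sum.} Let $j_0 := \lceil \log_2(1/\rho_0)\rceil$. For the fine shells $j \ge j_0$ we have $2^{-j} \le \rho_0$, so the density estimate gives $\mu(B_{2^{-j}}) \le \theta' 2^{-j(n-1)}$ and therefore $2^{j(n-1+m)}\mu(B_{2^{-j}}) \le \theta'\, 2^{jm}$; summing the geometric series from $j_0$ up to $N$ produces a bound $\lesssim_m \theta'\, 2^{Nm} \le \theta'\, r^{-m}$, where the last step uses $2^{-N} > r$. For the coarse shells $j < j_0$ — only finitely many, their number depending only on $\rho_0$ — we simply bound $\mu(B_{2^{-j}}) \le \mu(B_1)$ and absorb everything into a single constant $C_0 = C_0(n,m,\mu,\rho_0)$ that \emph{does not depend on $r$}. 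Collecting the two contributions,
\[
r^m \int_{B_1\setminus B_r} \frac{1}{|y|^{n-1+m}}\dd\mu(y) \;\le\; C_{n,m}\,\theta' \;+\; C_0\, r^m .
\]

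\textbf{Conclusion.} Letting $r \todown 0$ annihilates the term $C_0\, r^m$, so $\limsup_{r\todown 0} r^m \int_{B_1\setminus B_r}|y|^{-(n-1+m)}\dd\mu(y) \le C_{n,m}\,\theta'$; since $\theta' > \theta$ was arbitrary, the stated estimate follows with a constant depending only on $n$ and $m$. The only point requiring any care is the isolation of the coarse scales $j < j_0$, where the density bound is unavailable: one must notice that their total contribution is an $r$-independent constant, so that the $r^m$ prefactor sends it to zero in the limit. Beyond this, the argument is just the summation of a geometric series, and I do not anticipate any essential obstacle.
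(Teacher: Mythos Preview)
Your proof is correct. The dyadic shell decomposition, the isolation of the finitely many coarse shells where the density bound is not yet available, and the geometric series summation are all handled cleanly; the only delicate point---that the coarse contribution is an $r$-independent constant killed by the $r^m$ prefactor---is exactly the one you flag.

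The paper, however, argues differently. Instead of decomposing into shells, it introduces the distribution function $F(t)=\mu(B_t)$ and the density ratio $G(t)=t^{-(n-1)}F(t)$, rewrites the annular integral as a Stieltjes integral $\int_r^s t^{-(n-1+m)}\,\mathrm{d}F(t)$, and then integrates by parts twice (via the product rule for $\mathrm{d}(t^{n-1}G)$ and $\mathrm{d}(t^{-m}G)$) to bound everything by $\sup_{[r,s]}G\cdot(r^{-m}-s^{-m})$. Your approach is more elementary and entirely self-contained: it avoids Stieltjes calculus and makes the dependence on the density threshold $\rho_0$ completely explicit. The paper's approach, on the other hand, yields a pointwise bound on the whole annulus $B_s\setminus B_r$ in terms of $\sup_{[r,s]}G$ that is slightly sharper and more flexible (one can vary $s$), and it connects the estimate directly to the density ratio $G$ rather than to a fixed threshold. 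Both routes are standard; yours is arguably cleaner for the specific limsup statement being proved.
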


    \subsection{Properties of complex-elliptic operators}\label{sec:properties} In this section we recall a few facts from the theory of complex-elliptic operators. 
        The notion of complex-ellipticity originated with the work of \textsc{Aronszajn}~\cite{aron} and \textsc{Smith}~\cites{smith1,smith2}, who used it to derive $L^p$-coercive inequalities for elliptic boundary-value problems. Recently, \textsc{Breit, Diening \& Gmeineder}~\cite{breit2017traces} have shown that, for first-order operators, complex-ellipticity is a necessary and sufficient condition for extension and trace properties of the operator. An operator $\Acal$ being complex-elliptic allows one to exploit Hilbert's Nullstellensatz to show that (on an open connected set $\Omega$) the null space   is finite dimensional subspace of $V$-valued polynomials (see~\cite{smith1}*{Cor.~8.13 and~Rmk.~4}, see also~\cite
    {breit2017traces}*{Thm. 2.6}). More precisely, there exists a positive integer $\ell = \ell(\Acal)$ such that
    \[
    \Acal u = 0 \; \text{in  $\Dcal'(\Omega;W)$ \quad $\Longrightarrow \quad u \in \set{p \in V \otimes \Rbb[\mathbf x]}{\deg(p) < \ell}$.}
    \]
    In fact, due to the complex-ellipticity of $\Acal$, any \emph{smooth} function $u \in BV^\Acal(\Omega)$ possesses a strong Taylor expansion, as observed by Ka\l amajska~\cite{kal}:
    \begin{proposition}\label{prop:Kal_int_rep}
        Let $\Acal$ be a complex-elliptic operator. Then there exists $\ell \in \N$ such that for any ball $B \subset \Omega$ and every $u \in BV^\Acal(\Omega)\cap\Crm^\infty(\Omega;V)$, we have the integral representation
        \[
        		u(y) = (\Pcal_B u)(y) + 
    \int_B K(y,z) \Acal \, u(z) \dd z, \qquad y \in B,
        \]
        where 
        	\[
        \Pcal_B u(y) := \int_B \sum_{|\beta| \leq \ell -1} \partial^\beta_z\bigg(\frac{(z-y)^\beta}{\beta !}w_B(z)\bigg) \, u(z)\dd z 
        \]
        is the averaged Taylor polynomial of $u$ with respect to a weight $w_B \in \Crm_c^\infty(B)$ with $\int_B w_B = 1$, 
        and $K \in \Crm^\infty(\R^n \times \R^n \setminus \{y=z\} ; V \otimes W^*)$ is a kernel satisfying the growth condition
        \begin{equation}\label{mierda}
        |\partial^\alpha_y\partial^\beta_z K(y,z)| \lesssim |y-z|^{-(n-1)-|\alpha|-|\beta|}, \qquad y, z \in B,
        \end{equation}
        for all multi-indices $\alpha, \beta \in \Nbb^n$.
    \end{proposition}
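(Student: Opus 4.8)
\smallskip
\noindent\emph{Proof strategy.} The plan is to view this as a $\C$-elliptic version of the classical averaged-Taylor (Bramble--Hilbert) integral representation and to build it on one algebraic fact. Since $\Bcal[D]$ is $\C$-elliptic, the symbol $\Bbb(\xi)\in\mathrm{Lin}(V;W)$ is injective for every $\xi\in\C^{n}\setminus\{0\}$, so $\Bbb(\xi)^{\top}$ is surjective there; in module language this means that the cokernel of $\Bbb(\xi)^{\top}\colon\C[\xi]^{N}\to\C[\xi]^{M}$ is supported only at the origin, hence is annihilated by some power $\mathfrak{m}^{\ell}$ of the maximal ideal $\mathfrak{m}=(\xi_{1},\dots,\xi_{n})$. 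Concretely this furnishes an integer $\ell=\ell(\Bcal)$ — which we also take large enough that $\ker\Bcal[D]\subseteq\mathcal{P}_{\ell-1}(V)$ (the maximal degree relevant to the polynomial term) — such that for all $|\beta|=\ell$ and all $i$ there is a symbol identity $\xi^{\beta}e_{i}=\Bbb(\xi)^{\top}q_{i,\beta}(\xi)$ with $q_{i,\beta}$ homogeneous of degree $\ell-1$; that is, a \emph{generalized Saint-Venant identity}
\[
\partial^{\beta}u^{i}=\Lcal_{i,\beta}[D]\,(\Bcal u),\qquad |\beta|=\ell,\qquad u\in\Crm^{\infty},
\]
where $\Lcal_{i,\beta}[D]$ is a constant-coefficient homogeneous operator of order $\ell-1$ (for the symmetric gradient this is just $\partial_{i}\partial_{j}u^{k}=\partial_{i}E_{jk}u+\partial_{j}E_{ik}u-\partial_{k}E_{ij}u$). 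Crucially this identity is \emph{local}, which is what keeps the whole representation supported on $B$.

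\smallskip
\noindent\emph{Assembling the formula.} After a routine approximation we may assume $B\Subset\Omega$, so $u$ is smooth near $\overline B$. Fix $w_{B}\in\Crm_{c}^{\infty}(B)$ with $\int_{B}w_{B}=1$. Taylor's formula with integral remainder for $u^{i}$ based at $z$, multiplied by $w_{B}(z)$, integrated over $B$, and integrated by parts to move every $z$-derivative off $u$ (no boundary terms, $\supp w_{B}\Subset B$), produces exactly the claimed polynomial term plus a remainder $\ell\sum_{|\beta|=\ell}\tfrac{1}{\beta!}\int_{B}\kappa_{\beta}(y,z)\,\partial^{\beta}u^{i}(z)\dd z$, where the classical change of variables $s=(1-t)^{-1}$ (as for $\Bcal=\nabla$) yields
\[
\kappa_{\beta}(y,z)=(y-z)^{\beta}\int_{1}^{\infty}w_{B}\big(y+s(z-y)\big)\,s^{n-1}\dd s,
\]
a kernel supported (in $z$, for fixed $y$) in $\mathrm{conv}(\{y\}\cup\supp w_{B})\subset B$ with $|\partial_{y}^{\alpha}\partial_{z}^{\gamma}\kappa_{\beta}(y,z)|\lesssim|y-z|^{\ell-n-|\alpha|-|\gamma|}$. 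I would then substitute the Saint-Venant identity $\partial^{\beta}u^{i}=\Lcal_{i,\beta}[D]\Bcal u$ and integrate the order-$(\ell-1)$ operator $\Lcal_{i,\beta}[D]$ in $z$ from $\Bcal u$ onto $\kappa_{\beta}$: the only boundary contributions live on small spheres $\partial B_{\epsilon}(y)$ and vanish as $\epsilon\to0$ since each of the $\le\ell-1$ intermediate kernels is $O(|y-z|^{\ell-m-n})$, making the surface term $O(\epsilon^{\ell-m})$. This leaves $\int_{B}K^{i}(y,z)\Bcal u(z)\dd z$ with $K^{i}=\sum_{|\beta|=\ell}\tfrac{(-1)^{\ell-1}\ell}{\beta!}\Lcal_{i,\beta}^{\,*}[D]_{z}\kappa_{\beta}$; since $\kappa_{\beta}\sim|y-z|^{\ell-n}$ and $\Lcal_{i,\beta}^{\,*}[D]_{z}$ has order $\ell-1$, the kernel $K^{i}$ is smooth off the diagonal, and differentiating the explicit expression for $\kappa_{\beta}$ (using the bounded $s$-range) gives $|\partial_{y}^{\alpha}\partial_{z}^{\beta}K^{i}(y,z)|\lesssim|y-z|^{-(n-1)-|\alpha|-|\beta|}$; one extends $K^{i}$ by $0$ off $B\times B$. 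No cutoff and no data outside $B$ are ever needed, precisely because the Saint-Venant identity is local.

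\smallskip
\noindent\emph{The main obstacle.} The hard part is the algebraic input: establishing the generalized Saint-Venant identity $\partial^{\beta}u^{i}=\Lcal_{i,\beta}[D]\Bcal u$ for $|\beta|=\ell$, equivalently the $\mathfrak{m}$-primarity of $\mathrm{coker}(\Bbb(\xi)^{\top})$. This is exactly where $\C$-ellipticity (rather than mere ellipticity) enters — it is the algebraic reason that a \emph{bounded} number $\ell$ of coordinate differentiations of any $u$ can be expressed through $\Bcal u$ alone, and it is what singles out the degree-$(\ell-1)$ averaged Taylor polynomial as the correct correction. Once this is in hand, everything else — the integration-by-parts identity for the polynomial term, the change of variables producing $\kappa_{\beta}$, the vanishing of the boundary terms on $\partial B_{\epsilon}(y)$, and the kernel estimates — is routine bookkeeping. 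This yields, in streamlined form, Ka\l{}amajska's representation.
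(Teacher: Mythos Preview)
The paper does not supply a proof of this proposition: it is stated in the Preliminaries section as a known representation formula (implicitly referring to the work of Smith and, in its present form, Ka\l{}amajska), and is then used as a black box throughout Section~7. There is therefore nothing to compare your argument against inside the paper itself.

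That said, your reconstruction is correct and is precisely the classical route. The decisive step is the algebraic one you isolate: $\C$-ellipticity forces the cokernel of $\Bbb(\xi)^{\top}$ over $\C[\xi]$ to be supported at the origin, hence annihilated by $\mathfrak{m}^{\ell}$, which is exactly the generalized Saint-Venant identity $\partial^{\beta}u^{i}=\Lcal_{i,\beta}[D](\Bcal u)$ for $|\beta|=\ell$. This is where $\C$-ellipticity (and not mere real ellipticity) is genuinely needed, and it is the same mechanism that makes $\ker\Bcal[D]$ finite-dimensional --- the fact the paper cites from \cite{GR17,smith1} just before stating the proposition. Once that identity is available, feeding it into the Sobolev averaged-Taylor remainder, integrating the order-$(\ell-1)$ operator by parts onto $\kappa_{\beta}$, and reading off the $|y-z|^{-(n-1)}$ scaling is indeed routine, and your handling of the $\partial B_{\epsilon}(y)$ boundary terms is the standard one. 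Your sketch tacitly assumes $\Bcal[D]$ is first order (so that $\deg q_{i,\beta}=\ell-1$ and the final kernel lands at $|y-z|^{-(n-1)}$); this matches the kernel bound stated in the proposition and the way it is applied in Section~7, so no generality is lost.
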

    
    We will use the following definition for the lower order polynomials in the above representation.
In particular, the decay properties of $K$ give us the pointwise estimate
    \begin{equation}\label{eq:polyn_approx}
    |u - \Pcal_B u|(y) = \left|\int_B K(y,z) \, \Acal u(z) \dd z \right| \lesssim \int_B \frac{|\Acal u|(z)}{|y-z|^{n-1}}\dd z.
    \end{equation}
    We will henceforth denote $\Pcal_{B_r(x)} u$ and $\Pcal_{B_r} u$ by $\Pcal_{x,r} u$ and $\Pcal_r u$ respectively, for ease of notation. 
    \subsubsection{Poincar\'e inequalities} We recall from~\cite{GR17} that $BV^\Acal(B) \embed L^{1^*}(B;V)$ and 
    \begin{equation}\label{eq:poincare1}
    \|u - \Pcal_B u\|_{L^{1^*}(B)} \le c(n,B,\Acal)|\Acal u|(B)\qquad \forall u \in BV^\Acal(B).
    \end{equation}

    We will also frequently be restricting our considerations to smooth functions $u \in BV^\Acal(\Omega) \cap \Crm^\infty(\Omega;V)$. This is because one may approximate an arbitary map in $BV^\Acal$ in the appropriate sense by such functions. Indeed, we have the following result concerning \emph{strict density} of smooth functions in the space $BV^\Acal$:
    
    \begin{proposition}\label{prop:smoothapprox}
        Let $\Omega\subset \R^n$ be an open bounded  set and let  $u \in BV^\Acal(\Omega)$. Then, there exists a sequence $(u_j) \subset BV^\Acal(\Omega)\cap\Crm^\infty(\Omega;V)$ such that
        \[
        u_j \tolong u \quad \text{in} \ L^1(\Omega), \qquad |\Acal u_j|(\Omega) \tolong |\Acal u|(\Omega).
        \]
    \end{proposition}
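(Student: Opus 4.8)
The plan is to adapt the classical mollification-plus-partition-of-unity construction behind the strict density of $\Crm^\infty$ in $\BV$ (cf.~\cite[Thm.~3.9]{AFP2000}). Two features of the present setting make it go through: $\Bcal[D]$ has constant coefficients and hence commutes with convolution; and the lower-order terms produced by localising a $\BV^\Bcal$-function telescope to zero when summed against a partition of unity.

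First I would fix $\eta>0$, choose open sets $\Omega_k\Subset\Omega_{k+1}\Subset\Omega$ exhausting $\Omega$ and an index $k_0$ with $|\Bcal u|(\Omega\setminus\Omega_{k_0})<\eta$, and then set $A_0:=\Omega_{k_0+1}$ and $A_i:=\Omega_{k_0+i+1}\setminus\overline{\Omega_{k_0+i-1}}$ for $i\ge1$. This is a locally finite open cover of $\Omega$; let $\{\varphi_i\}\subset\Crm_c^\infty(A_i)$ be a subordinate partition of unity, so that $\sum_i\varphi_i\equiv1$ and, crucially, $\sum_i\partial^\gamma\varphi_i\equiv0$ for every multi-index $|\gamma|\ge1$. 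For a first-order operator the Leibniz rule gives, on all of $\R^n$ (extending each piece by zero),
\[
\Bcal[D](\varphi_i u)=\varphi_i\,\Bcal u+R_i,\qquad R_i:=\sum_{j=1}^n B_j\,u\,\partial_j\varphi_i\in\Lrm^1(A_i;W),\qquad\textstyle\sum_i R_i\equiv0;
\]
for operators of order $k\ge2$ the remainder $R_i$ collects terms $(\partial^\gamma\varphi_i)\,\partial^\delta u$ with $1\le|\gamma|$ and $|\delta|\le k-1$, which lie in $\Lrm^1_\loc$ by the Calderón--Zygmund embedding \eqref{eq:emb}, and grouping by $(\gamma,\delta)$ again yields $\sum_i R_i\equiv0$.

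Next I would mollify the pieces: with $(\rho_\varepsilon)$ a standard mollifier, for each $i$ pick $\varepsilon_i>0$ small enough that $\rho_{\varepsilon_i}\conv(\varphi_i u)$ is supported in a slight enlargement of $A_i$ still inside $\Omega$ (possible since $A_i\Subset\Omega$), that $\|\rho_{\varepsilon_i}\conv(\varphi_i u)-\varphi_i u\|_{\Lrm^1}<\eta2^{-i}$, and that $\|\rho_{\varepsilon_i}\conv R_i-R_i\|_{\Lrm^1}<\eta2^{-i}$. Then $u_\eta:=\sum_i\rho_{\varepsilon_i}\conv(\varphi_i u)\in\Crm^\infty(\Omega;V)$ (the sum is locally finite) and $\|u_\eta-u\|_{\Lrm^1(\Omega)}\le\eta$. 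Since $\Bcal$ commutes with convolution,
\[
\Bcal u_\eta=\sum_i\rho_{\varepsilon_i}\conv(\varphi_i\,\Bcal u)+\sum_i\rho_{\varepsilon_i}\conv R_i;
\]
the first sum is a measure of total mass $\le\sum_i\int\varphi_i\dd|\Bcal u|=|\Bcal u|(\Omega)$ (convolution with a probability kernel does not increase total variation, and $\varphi_i\ge0$, $\sum_i\varphi_i\equiv1$), while the second, by $\sum_i R_i\equiv0$ and the choice of $\varepsilon_i$, has $\Lrm^1$-norm $<\eta$; hence $|\Bcal u_\eta|(\Omega)\le|\Bcal u|(\Omega)+\eta$. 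Combining this with the lower bound $|\Bcal u|(\Omega)\le\liminf_{\eta\todown0}|\Bcal u_\eta|(\Omega)$ — which follows from $u_\eta\to u$ in $\Lrm^1$ by testing the defining duality against $\phi\in\Crm_c^\infty(\Omega;W)$ with $\|\phi\|_\infty\le1$ — gives $|\Bcal u_\eta|(\Omega)\to|\Bcal u|(\Omega)$, and $u_j:=u_{1/j}$ is the required sequence.

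The step I expect to need the most care is the localisation/mollification bookkeeping: the radii $\varepsilon_i$ must be chosen small enough both to keep the shifted supports inside $\Omega$ — which is why one uses an annular cover rather than a plain exhaustion — and to make the total commutator error summable, and it is the exact identity $\sum_i R_i\equiv0$, an immediate consequence of $\sum_i\varphi_i\equiv1$, that prevents the localisation from destroying the \emph{strict} (as opposed to merely weak-$*$) convergence of $\Bcal u_\eta$. I do not expect the Lipschitz regularity of $\partial\Omega$ to play a role here; it becomes relevant only for the trace-type results later built upon this proposition.
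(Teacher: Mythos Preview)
Your proposal is correct and is precisely the approach the paper has in mind: the paper omits the proof, stating that it ``is almost identical to that for the classical $\BV$ case'' and referring to \cite[Thm.~2.8]{breit2017traces}, and what you have written is exactly that classical Meyers--Serrin/Anzellotti--Giaquinta argument, with the correct observation that the commutator terms $R_i$ telescope because $\sum_i\partial^\gamma\varphi_i\equiv0$. Your closing remark that the Lipschitz hypothesis on $\partial\Omega$ is not actually used here is also accurate.
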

    The proof of this is almost identical to that for the classical $BV$ case, so is omitted here. For the details, see \cite{breit2017traces}*{Thm.~2.8} (the more general approximation result for constant rank operators is also given in~\cite{arroyorabasa2019characterization}*{Thm.~1.6}).

    \section{Proofs of the structural properties}\label{s:str-proofs}
    
    This section is devoted to the proof of all the results previously discussed with exception of the capacitary estimate of $S_u$.

    \subsection{Proof of Theorem~\ref{lem:jump}} First, we show that $J_u \subset \Theta_u$. In fact we shall prove that the lower $(n-1)$-dimensional density is non-zero. We follow the classical reasoning used for $BV$ and $\BD$-spaces; see~\cite{ambrosio1997fine-properties}.
    Let $x \in J_u$, and fix a scale $r>0$ sufficiently small so that $B_r(x) \subset \Omega$. By the properties of $J_u$, for the re-scaled functions
    \[
    u_r \coloneqq u(x + r\, \frarg) : B_1 \to V,
    \]
    there exist $a, b \in V$ such that 
    \[
    |a-b| > 0 \quad \text{and} \quad u_r \tolong \mathbf 1_{(a,b,\nu(x))} \;\; \text{in $L^1(B_1;V)$},
    \]
    where as usual, $\nu(x)$ is a unit normal to $J_u$ at $x$, and
    \[
        \mathbf 1_{(a,b,\nu)}(x) \coloneqq  \begin{cases}
            a & \text{if $x \cdot \nu > 0$}\\
            b & \text{if $x \cdot \nu \le  0$}\\
        \end{cases}.
    \]
    The lower semicontinuity of the map $v \mapsto |\Acal v|$ on $B_1$ with respect to $L^1$-convergence and a change of variables yield
    \begin{align*}
    \liminf_{r \todown 0} \frac{|\Acal u|(B_r(x))}{r^{n-1}} & = \liminf_{r \todown 0} |\Acal u_r|(B_1) \\
    & \ge |\Acal \mathbf 1_{(a,b,\nu(x))}|(B_1) \\
    & \simeq_n |\Abb(\nu(x)) (a - b)| \\
    & \ge c_\Acal |a-b| >0,
    \end{align*} 
    where in the last two-inequalities we have used that $\mathbf 1_{(a,b,\nu(x))}$ belongs to $BV(B_1;V)$ and that the ellipticity constant $c_\Acal$ is positive. 
    This shows that $x \in \Theta_u$. 
    
    We now turn to the rectifiability question. By classical measure theoretic arguments (see, for example, \cite{mattila1995geometry-of-set}*{Sec.~6}), it follows that the set 
    \[
    \set{x \in J_u}{\theta^{*(n-1)}(|\Acal u|,x) = \infty} \subset J_u
    \] 
    is $\Hcal^{n-1}$-negligible. Let us define $G_u \coloneqq \set{x \in J_u}{\theta^{*(n-1)}(|\Acal u|,x)< \infty}$. 
        Notice that since we also have $J_u \subset \Theta_u$, the lower and upper dimensional densities are non-degenerate for all points in $G_u$, namely, 
    \[
    0 < \theta_*^{n-1}(|\Acal u|,x) \le  \theta^{* (n-1)}(|\Acal u|,x) < \infty, \qquad x \in G_u.
    \]
    In particular, the measures $\Hcal^{n-1}$ and $\Acal u$ are \emph{equivalent} on $G_u$, i.e.,
    \begin{equation}\label{eq:G}
    \Acal u \mres G_u \ll \Hcal^{n-1} \mres G_u \ll \Acal u \mres G_u.
    \end{equation}
    In this regime, one may replace the normalizing constants $c_j \todown 0$ of every blow-up sequence  
    \[
    c_j T_{x,r_j}[\Acal u] \toweakstar \tau, \quad r_j \todown 0;
    \] 
    by $c_j = c r_j^{-{(n-1)}}$ (up to subsequence) for some positive number $c > 0$. 
    Now, we already know that
    \[
    c \ \Acal u_{r_j}=  c_j T_{x,r_j}[\Acal u],
    \]
    and that
    \[
    \Acal u_{r_j} \toweakstar \Acal \mathbf 1_{(a,b,\nu(x))} = \Abb(\nu(x))(a-b)\Hcal^{n-1} \mres \nu(x)^\perp.
    \]
    Thus, we must have
    \[
    \tau = c \ \Abb(\nu(x))(a-b)\Hcal^{n-1} \mres \nu(x)^\perp,
    \]
        which is a uniform measure over the hyperplane $\nu(x)^\perp \in \Gr(n-1,n)$. Since $\tau$ was arbitrary tangent measure of $\Acal u$, this calculation shows (cf.~\eqref{eq:G}) that 
    \begin{equation}\label{tr}
    \Tan(\Acal u,x) = \Gcal_{n-1,n}(\nu(x)^\perp) \quad \text{for $|\Acal u|$-almost every $x \in G_u$}.
    \end{equation}
    Here, $\Gcal_{m,n}(\pi)$ is the set of $m$-flat measures supported on $\pi \in \Gr(m,n)$. 
    We can now apply the rectifiability criterion contained~\cite{mattila1995geometry-of-set}*{Theorem~16.7}, which states that
    \[
    \Acal u \mres G_u \:\: \text{is countably $\Hcal^{n-1}$-rectifiable,}
    \]
    and also addresses the desired countable $\Hcal^{n-1}$-rectifiability of $G_u$ (and $J_u$) up to an $\Hcal^{n-1}$-null set. Notice that, up to a change of sign, the characterization in~\eqref{tr} also implies that the Borel map $\nu$ (from the Borel jump triplet $(u^+,u^-,\nu)$) is an orientation of $J_u$.   Moreover, the characterization of the tangent measure $\tau$ discussed above, and the classical measure theoretic fact that $\Tan(\Acal u,x) = \frac{\dd \Acal u}{\dd |\Acal u|}(x) \Tan(|\Acal u|,x)$ for $|\Acal u|$-almost every $x \in \Omega$, implies that
    \[
    \frac{\dd \Acal u}{\dd |\Acal u|}(x) = \Abb(\nu(x))\big(u^+(x) - u^-(x)\big)
    \]
    This verifies the sought representation of $\Acal^j u \mres G_u$. \qed

    \subsection{Proof of Theorem~\ref{thm:twoside}} We prove the statement of the theorem when $M$ is the graph of a Lipschitz map $f : \R^{n-1} \to \R$ on $\Omega$. The statement for general countably $\Hcal^{n-1}$-rectifiable sets then follows by a standard geometric covering argument. 
    Let us write $\Omega^+ = \set{z \in \Omega}{z > f(x)}$ and  $\Omega^- = \set{z \in \Omega}{z < f(x)}$ to denote the (open and locally Lipschitz) sides of $M$ on $\Omega$. In this case, \cite{breit2017traces}*{Cor.~4.21} applied to the map $u = u\mathbf 1_{\Omega^+} + u\mathbf 1_{\Omega^-}$ gives
    \[
    \Acal u = \Acal u \mres \Omega^+ + \Acal u \mres \Omega^- + \Abb(\nu_\Gamma)(\tr^+(u) - \tr^-(u)) \, \Hcal^{n-1} \mres M,
    \]
        where $\tr^\pm: BV^\Acal(\Omega^\pm) \to L^1(\Gamma;V)$ is the \emph{exterior} linear trace operator corresponding to $\partial \Omega^\pm$, and where $\nu_M(x)$ denotes the outer unit normal of $\Omega^+$ at $x$. Our candidate for the one-sided values of $u$ on $M$ will naturally be $u^\pm_M = \tr^\pm(u)$, which (by the boundedness of the one-sided exterior traces) exist for $\Hcal^{n-1}$-almost every $x \in M$. 
    Once this is verified, we will obtain the desired expression
    \begin{equation}\label{eq:middle}
    \Acal u \mres M = \Abb(\nu_M)\big(u_M^+ - u_M^-\big) \, \Hcal^{n-1} \mres M.
    \end{equation}
    We are left to check that $u^\pm_M$ are in fact the two-sided approximate limits of $u$. Without loss of generality, we shall show only that
    \[
    \limsup_{r\todown 0} \aveint{B_r(x) \cap \Omega^+}{} |u - u_M^+(x)|^{1^*} \dd y = 0 \quad \text{for $\Hcal^{n-1}$-almost every $x \in M$},
    \]
    since the proof for $u^-_M$ is entirely analogous.
    
    \textit{Step~1. Removal of discontinuities on the surface.} Since $\tr^+(u) \in L^1(M;V)$, we may use the classical $BV$-extension to find $v \in BV(\Omega^-,V)$ satisfying 
    \begin{equation*}\label{eq:taylor0}
    \tr^-(v) = u_M^+ \quad \text{on $L^1(M;V)$.}
    \end{equation*}
   The classical $BV$ trace operator $\tr^-: BV(\Omega^-;V) \to L^1(M;V)$ satisfies the point-wise average representation (see for instance~\cite{AFP2000}*{Thm.~3.87})
    \begin{equation}\label{eq:taylor2}
    \limsup_{r \todown 0} \aveint{B_r(x) \cap \Omega^-}{} |v - u^+_M|^{1^*} \dd y = 0 \quad \text{for $\Hcal^{n-1}$-almost every $x \in M$.}
    \end{equation}
    Define $\tilde u \coloneqq \mathbf 1_{\Omega^+} u + \mathbf 1_{\Omega^-} v$. It follows from~\cite{breit2017traces}*{Corollary~4.21} that $\tilde u \in BV^\Acal(\Omega)$. Furthermore, due to the compatibility conditions on $M$ we get 
    \[\Acal \tilde u \mres M \equiv 0, \quad \text{or equivalently,} \quad \Acal \tilde u = \Acal u \mres \Omega^+ + \Acal v \mres \Omega^-. 
    \]
    \textit{Step~2. Polynomial approximation.} Let us recall the following well-known property of mutually singular measures: the Radon--Nykod\'ym Differentiation Theorem implies that
    \[
    \frac{\dd \sigma}{\dd \nu}   = 0 \quad \text{for $\nu$-almost every $x \in M$,}
    \]
    whenever $\sigma \perp \nu$. 
    This property applied to $\sigma = |\Acal \tilde u|$ and $\nu =\Hcal^{n-1} \mres M$ in turn gives 
    \begin{equation*}\label{eq:density}
    \limsup_{r \todown 0} \frac{|\Acal \tilde u| (B_r(x))}{r^{n-1}} = 0 \quad \text{for $\Hcal^{n-1}$-almost every $x \in M \cap \Theta_{\tilde u}$.}
    \end{equation*}
    Note that we are considering the set of positive density of $\tilde u$ in the above statement. In particular, the \emph{scale-dependent continuity} (cf. Theorem~\ref{prop:capacity}) of functions of bounded $\Acal$-variation at points where the upper $(n-1)$-dimensional density vanishes says that for $\Hcal^{n-1}$-almost every $x \in M$ there exist $V$-valued polynomials $\Pcal_{x,r} \tilde u$ in a finite dimensional subspace $\mathscr F \le V \otimes \R[x_1,\dots,x_n]$ with the property that
    \begin{equation}\label{eq:taylor1}
    \limsup_{r \todown 0} \aveint{B_r(x)}{} |\tilde u - (\Pcal_{x,r} \tilde u)_0|^{1^*} \dd y = 0.
    \end{equation}
       Here, we used~\eqref{eq:quasihot} in Theorem~\ref{prop:capacity} to dispense with the higher order terms of $\Pcal_{x,r}\tilde{u}$ in the limit as $r \to 0$. From~\eqref{eq:taylor2} and~\eqref{eq:taylor1} it follows that the limit of the zero-order term must be $u_M^+(x)$, that is
    \begin{align*}\label{eq:tracehot}
 \limsup_{r \todown 0}  & |(\Pcal_{x,r}\tilde{u})_0 - u_M^+(x)| = \limsup_{r \todown 0}   \aveint{B_r^-(x) \cap \Omega^-}{}  |(\Pcal_{x,r} \tilde u)_0- u_M^+(x)|\dd y  \\
   & \lesssim_n  \limsup_{r \todown 0}   \left[ \aveint{B_r(x) \cap \Omega}{} |(\Pcal_{x,r} \tilde u)_0- \tilde u|^{1^*}\dd y +  \aveint{B_r(x) \cap \Omega^-}{} |v - u^+_M|^{1^*}\dd y \right] = 0.
      \end{align*}
   We conclude that
    \begin{align*}
    \limsup_{r \todown 0} & \aveint{B_r(x) \cap \Omega^+}{} |u - u_M^+(x)|^{1^*} \dd y = \limsup_{r \todown 0}\aveint{B_r(x) \cap \Omega^+}{} |\tilde{u} - (\Pcal_{x,r}\tilde{u})_0|^{1^*} \dd y \\
   & \lesssim_n \limsup_{r \todown 0} \aveint{B_r(x)}{} |\tilde{u} - (\Pcal_{x,r}\tilde{u})_0|^{1^*} \dd y \stackrel{\eqref{eq:taylor1}}= 0.
    \end{align*}
    The $L^1$-bound for $u^+_M - u^-_M$ follows directly from the ellipticity of $\Acal$ and~\eqref{eq:middle}:
    \[
    	C(\Abb) \int_M |u^+_M - u^-_M| \dd \Hcal^{n-1} \le \int_M |\Abb(\nu)[u^+_M - u^-_M]| \dd \Hcal^{n-1} \le |\Acal u|(\Omega).
    \]
    
    \subsection{Proof of Corollaries \ref{exterior} and \ref{interior}} The characterization of the exterior trace follows from the characterization of $\tr_\Omega$ in terms of $u^+_\Omega$ and its continuity bound
    \[
    	\|\tr_\Omega u\|_{L^1(\partial \Omega)} \le C \|u\|_{BV^\Acal(\Omega)}\,.
    \]
The proof of the boundedness of the interior trace follows verbatim from the same argument applied to $\tr_{\Omega^+}$ and $\tr_{\Omega^-}$, where $\Omega^\pm$ are as in the previous proof. \qed

    \subsection{Proof of Corollary~\ref{thm:jump2}} The first assertion was already proved in~\eqref{eq:middle} for Lipschitz hypersurfaces. The general case for a countably $\Hcal^{n-1}$-rectifiable set $\Gamma$ follows from the fact that the Lipschitz surfaces $(M_h)_{h\in \Nbb}$ ``covering'' can be taken such that 
    \[
    	\Hcal^{n-1}(M_i \cap M_j) = 0 \qquad \text{for all $i \neq j$}\,.
    \]
    
The following is a well-known fact of tangents to rectifiable sets: if $\Gamma_1, \Gamma_2$ are countably $\Hcal^m$-rectifiable, then
\[
	\Tan(\Gamma_1,x) = \Tan(\Gamma_2,x) \qquad \text{for $\Hcal^{m}$ a.e. $x \in \Gamma_1 \cap \Gamma_2$}\,.
\]
The second statement follows directly from Theorem~\ref{thm:twoside} by letting $m = n-1$ and taking $\Gamma_1 = \Gamma$ and $\Gamma_2 = J_u$. Indeed, we may assume without loss of generality that $\nu$ orientates both $\Gamma$ and $J_u$ at their intersection, and hence, by Theorem~\ref{thm:twoside} it follows that $(u^+,u^-,\nu)$ is a jump triple.

    Lastly, we show that $S_u \setminus J_u$ is $\Hcal^{n-1}$-purely unrectifiable. Let $M \subset \Omega$  be an $m$-dimensional Lipschitz hypersurface.  Since $M$ is rectifiable, the set $M \cap S_u \cap J_u^\complement$ is also rectifiable. Therefore, $u$ has one-sided limits for $\Hcal^{n-1}$-almost every $x \in M \cap S_u \cap J_u^\complement$. However, the assumption $x \notin J_u$ implies the one-sided limits coincide and hence $u$ is approximately continuous at $\Hcal^{n-1}$-almost every $x$ there. 
On the other hand, $x \in S_u$, so the previous statement can only hold on a negligible set, that is, 
    \[
    \Hcal^{n-1}(M \cap S_u \cap J_u^\complement) =0.
    \]
    This proves that $S_u \cap J_u^\complement$ is $\Hcal^{n-1}$-purely unrectifiable. \qed

    \section{Dimensional estimates for $S_u$}\label{sct:Cantor_dim_est}
    
    Here and in what follows we assume that $\Acal$ is complex-elliptic.
        The purpose of this section is to establish a dimensional estimate on $S_u \setminus \Theta_u$ of all discontinuity points with zero density. Moreover, we show that all points of zero upper $(n-1)$-density vanishes are in fact quasi-continuity points.
    
    To begin with, let us introduce the following notation. Write
    \begin{equation}\label{eq:capacityblowup}
    N_u  \coloneqq \set{x \in \Omega}{I_1 (|\Acal u|)(x) = \int |x-y|^{-(n-1)} \dd|\Acal u|(y)= \infty}.
    \end{equation}
    By \eqref{eq:capacity_badset}, we know that $\mathrm{Cap}_{n-1}(N_u) = 0$.

    \subsection{Proof of Theorem~\ref{prop:capacity}}
    We need to prove the decay of the higher-order Taylor coefficients and the final statement of approximate continuity, since the proof of the first estimate in the proposition follows from an integrated version of estimate \eqref{eq:polyn_approx} and an approximation argument as in Proposition~\ref{prop:smoothapprox}. We adopt an analogous approach to~\cite{kohn1979new-estimates-f}*{Sect.~5}. We show the following lemma, from which Theorem \ref{prop:capacity} follows immediately. 
    
    \begin{lemma}\label{L:1}
        Suppose that $\Acal$ is complex-elliptic, and let $u \in BV^\Acal(\Omega)$.
        For any multi-index $\alpha \in \Nbb^n$, let $c_{x,r}^\alpha/\alpha! \in V$ to be the coefficient of the term $(y - x)^\alpha$ from the Taylor polynomial $\Pcal_{x,r} u$. Then, 
        \[
        \limsup_{r \todown 0} \, r^{|\alpha|}|c_{x,r}^\alpha| \lesssim_{n,\alpha}  \theta^{*(n-1)}(|\Acal u|,x) \qquad \text{for all $1 \le |\alpha| \le \ell-1$},
        \]
        where $\ell \in \Nbb$ is the integer from Proposition~\ref{prop:Kal_int_rep}. Moreover, 
                \[
        \limsup_{r \todown 0} |c_{x,r}^0 - c_{x,\rho}^0| \lesssim_{n} \int_{\cl{B_r(x)} \setminus B_\rho(x)} \frac{|\Acal u|(y)}{|y - x|^{n-1}} \dd y
        \]
         for all $0 < r \le \rho < \dist(x,\partial\Omega)$.
            \end{lemma}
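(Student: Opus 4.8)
The plan is to adapt the telescoping scheme of \textsc{Kohn} (cf. the proof of Lemma~\ref{lem:Kohnpotential} and~\cite[Sect.~5]{kohn1979new-estimates-f}): compare the polynomials $\Pcal_{x,r}u$ across a geometric sequence of radii and read off coefficient bounds using the finite dimensionality of the space of polynomials of degree $\le\ell-1$. After a translation we may assume $x=0$, and we may assume $\theta\coloneqq\theta^{*(n-1)}(|\Bcal u|,0)<\infty$, as otherwise both assertions are trivial. Two facts will be used repeatedly. First, the integrated form of~\eqref{eq:polyn_approx}, $\aveint{B_s}{}|u-\Pcal_{0,s}u|\dd y\lesssim_n s^{-(n-1)}|\Bcal u|(B_s)$, which holds for every $u\in\BV^\Bcal(\Omega)$ (this is the first estimate of Proposition~\ref{prop:capacity}, already available); together with the inclusion bound $\aveint{B_\rho}{}|f|\dd y\le(s/\rho)^n\aveint{B_s}{}|f|\dd y$ for $0<\rho\le s$ this gives $\aveint{B_\rho}{}|u-\Pcal_{0,s}u|\dd y\lesssim_n(s/\rho)^n s^{-(n-1)}|\Bcal u|(B_s)$. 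Second, for any polynomial $Q(y)=\sum_{|\alpha|\le\ell-1}q_\alpha y^\alpha$ one has $\rho^{|\alpha|}|q_\alpha|\lesssim_{n,\ell}\aveint{B_\rho}{}|Q|\dd y$, obtained by rescaling $B_\rho$ to $B_1$ and invoking equivalence of norms on a fixed finite-dimensional space.

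For the decay of the coefficients with $1\le|\alpha|\le\ell-1$, fix $\rho_0$ with $\overline{B_{\rho_0}}\subset\Omega$, put $r_j\coloneqq2^{-j}\rho_0$, $P_j\coloneqq\Pcal_{0,r_j}u$, $Q_j\coloneqq P_{j+1}-P_j$, and write $P_j(y)=\sum_\alpha\frac{c_{0,r_j}^\alpha}{\alpha!}y^\alpha$. Estimating $Q_j$ on the smaller ball $B_{r_{j+1}}$ by the triangle inequality and the first fact gives $\aveint{B_{r_{j+1}}}{}|Q_j|\dd y\lesssim_n\eta_j$, where $\eta_j\coloneqq r_j^{-(n-1)}|\Bcal u|(B_{r_j})$ satisfies $\limsup_j\eta_j\le\theta$; the second fact then yields $r_j^{|\alpha|}|c_{0,r_{j+1}}^\alpha-c_{0,r_j}^\alpha|\lesssim_{n,\ell}\eta_j$. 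Telescoping, $r_j^{|\alpha|}|c_{0,r_j}^\alpha|\le r_j^{|\alpha|}|c_{0,r_0}^\alpha|+\sum_{i<j}(r_j/r_i)^{|\alpha|}\,r_i^{|\alpha|}|c_{0,r_{i+1}}^\alpha-c_{0,r_i}^\alpha|\lesssim_{n,\ell}r_j^{|\alpha|}|c_{0,r_0}^\alpha|+\sum_{k=1}^{j}2^{-k|\alpha|}\eta_{j-k}$; the first term tends to $0$, and since $|\alpha|\ge1$ the geometric-convolution sum has $\limsup_{j\to\infty}\lesssim_{n,\alpha}\limsup_i\eta_i\le\theta$ (cut the sum at a late index past which $\eta_i\le\theta+\delta$). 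Finally, for arbitrary $r\in(r_{j+1},r_j)$, comparing $\Pcal_{0,r}u$ with $P_j$ on $B_r\subset B_{r_j}$ via the same two facts ($r\simeq r_j$) controls $r^{|\alpha|}|c_{0,r}^\alpha-c_{0,r_j}^\alpha|$ by $\lesssim_{n,\ell}\eta_j$, which upgrades the dyadic bound to $\limsup_{r\todown0}r^{|\alpha|}|c_{0,r}^\alpha|\lesssim_{n,\alpha}\theta$.

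For the increment of the constant term I would first reduce to smooth $u$ by strict density of $\Crm^\infty\cap\BV^\Bcal$ (Proposition~\ref{prop:smoothapprox}): for fixed radii $c_{0,s}^0(u_j)=\Pcal_{0,s}u_j(0)\to c_{0,s}^0(u)$ since $\Pcal_{0,s}$ is continuous against $\Lrm^1$-convergence, and the right-hand side passes to the limit by a routine measure-theoretic argument using that the weight $|z|^{-(n-1)}$ is bounded on the closed annulus $\{\rho\le|z|\le r\}$. For smooth $u$ and $0<\rho\le r<\dist(0,\partial\Omega)$, subtract the integral representations of Proposition~\ref{prop:Kal_int_rep} written for $B_r$ and for $B_\rho$: on $B_\rho$ the $u$-terms cancel, leaving $\Pcal_{0,r}u-\Pcal_{0,\rho}u=-\int_{B_r\setminus B_\rho}K(\cdot,z)\,\Bcal u(z)\dd z$ there. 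Since $0\in B_\rho$ lies off the annulus, the kernel bound~\eqref{mierda} gives $|K(0,z)|\lesssim|z|^{-(n-1)}$, and evaluating at $y=0$ yields $|c_{0,r}^0-c_{0,\rho}^0|\lesssim_n\int_{B_r\setminus B_\rho}|z|^{-(n-1)}\dd|\Bcal u|(z)$, which is the asserted estimate; the stated $\limsup$ form is then immediate.

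The main obstacle is the bookkeeping in the telescoping step of the first assertion: one must pass from a bound on each individual coefficient increment to a bound on the $\limsup$ of the partial sums — not merely on their supremum — which works only because the rescaling factors $(r_j/r_i)^{|\alpha|}=2^{-(j-i)|\alpha|}$ decay geometrically. This is precisely where $|\alpha|\ge1$ enters, and it is the reason the constant term ($\alpha=0$) has no $\theta$-bound and must instead be handled through the Riesz potential. The remaining difficulty is purely technical: since~\eqref{eq:polyn_approx} and the representation of Proposition~\ref{prop:Kal_int_rep} are stated only for smooth maps, every pointwise use of them must be routed either through the integrated estimate (valid for all $u\in\BV^\Bcal$) or through the strict approximation above, with the mild care needed to pass the weighted integral over the annulus to the limit.
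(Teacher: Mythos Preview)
Your proof is correct, but your route to the first assertion differs from the paper's. The paper works pointwise with the differentiated kernel representation: for smooth $u$ it writes $\partial^\alpha u(0)=c_t^\alpha+\int_{B_t}\partial_y^\alpha K(0,z)\,\Bcal u(z)\dd z$, subtracts at two scales to obtain the single annulus estimate $r^{|\alpha|}|c_\rho^\alpha-c_r^\alpha|\lesssim r^{|\alpha|}\int_{B_\rho\setminus B_r}|z|^{-(n-1+|\alpha|)}\dd|\Bcal u|$, and then invokes Lemma~\ref{lem:Kohnpotential} to bound this by the upper density; both conclusions are then read off from this one inequality (take $|\alpha|\ge1$ with $\rho$ fixed for the first, $\alpha=0$ for the second), and the passage to general $u$ is done once at the end via strict approximation and Reshetnyak's continuity theorem. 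Your argument for the first assertion instead uses only the \emph{integrated} estimate (the first line of Proposition~\ref{prop:capacity}), a dyadic telescoping, and norm equivalence on the finite-dimensional polynomial space. This is more elementary---it avoids both the pointwise representation and Lemma~\ref{lem:Kohnpotential}, and it works directly for all $u\in\BV^\Bcal$ without any smoothing step---but the trade-off is that you do not obtain the two-scale estimate~\eqref{hey} as a by-product, so for the second assertion you must return separately to the pointwise representation and run the smooth-approximation argument, whereas the paper gets both conclusions at once. For the second assertion your argument coincides with the paper's.
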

    
    \begin{proof}[Proof of Lemma~\ref{L:1}]
        Fix a multi-index $\alpha$ as in the statement of the lemma. Without any loss of generality, we may assume that $x = 0$.  First, we show that the assertions hold for any given $u \in BV^\Acal(\Omega) \cap \Crm^\infty(\Omega;V)$.	For any $0 < t < \dist(0,\partial\Omega)$, we have
        \[
        \partial^\alpha u(y) = \partial^\alpha\Pcal_t u(y) + \int_{B_t} \partial^\alpha_y K(y,z) \, \Acal u(z) \dd z.
        \]
        In particular, since $\Pcal_t u$ is a polynomial centered at $0$, it must hold that
        \[
        \partial^\alpha u(0) =  c_{t}^\alpha + \int_{B_t} \partial^\alpha_y K(0,z) \, \Acal u(z) \dd z, 
        \]
        where ${c_t^\alpha}/{\alpha!}$ is the coefficient of the monomial $x^\alpha$ in the Taylor polynomial $\Pcal_t u$.
        Thus, taking differences between coefficients at scales $0 < r < \rho < \dist(0,\partial\Omega)$ and multiplying by $r^{|\alpha|}$, we obtain the estimate (cf.~\eqref{mierda})
                \begin{equation}\label{hey}
        \begin{split}
        r^{|\alpha|} |c_\rho^\alpha - c_r^\alpha| & \le 
        r^{|\alpha|}\int_{B_\rho\setminus B_r} |\partial^\alpha_y K(0,z)|\, |\Acal u(z)| \dd z \\ 
        &  \lesssim_n  r^{|\alpha|} \int_{\cl{B_\rho}\setminus B_r} \frac{|\Acal u|(z)}{|z|^{n-1 + |\alpha|}}\dd z.
        \end{split}
        \end{equation}
        Hence, by Lemma~\ref{lem:Kohnpotential} (replacing $B_1$ with $B_\rho$), we deduce that
        \[
        \limsup_{r \todown 0} r^{|\alpha|}|c_\rho^\alpha - c_r^\alpha| \lesssim_{n,\alpha} \theta^{*(n-1)}(\Acal u,0) \quad \text{for all $1 \le |\alpha| \le \ell -1$}.
        \]
        Taking $\rho = 1$ and letting $r \todown 0$ gives
        \begin{equation*}
        \limsup_{r \todown 0} r^{|\alpha|}|c_{r,\alpha}| \leq  \limsup_{r \todown 0} r^{|\alpha|}|c_{r,\alpha} - c_{1,\alpha}| + \limsup_{r \todown 0} r^{|\alpha|} |c_{1,\alpha}| = \theta^{*(n-1)}(|\Acal u|,0)
        \end{equation*}
        for all multi-indexes $\alpha$ with $1 \le |\alpha| \le \ell -1$. This proves the first assertion. The second assertion follows by taking $\alpha = 0$ in~\eqref{hey}.
        
        To prove the desired results for arbitrary $u \in BV^\Acal(\Omega)$, it suffices that the key estimate~\eqref{hey} holds for any $u \in (C^\infty \cap BV^\Acal)(\Omega)$. The argument is as follows: By Proposition~\ref{prop:smoothapprox}, we know there exists a sequence $(u_j) \subset \Crm^\infty\cap BV^\Acal(\Omega)$ with $u_j \to u$ in $L^1$ and $|\Acal u_j|(\Omega) \to |\Acal u|(\Omega)$. Let $c^{u_j}_{r,\alpha}/\alpha!$ (respectively $c^u_{r,\alpha}/\alpha!$) denote the coefficient of the order $\alpha$ term of $\Pcal_r u_j$ (respectively $\Pcal_r u$). Then, since each $c^u_{r,\alpha}$ is a weighted sum of terms of the form
        \[
        \int_{B_r} \partial^\beta\big(z^{\beta- \alpha} w_{B_r}(z)\big) \, u(z) \dd z, \qquad \beta > \alpha,
        \]
        we also have that 
        \[
        c_{r,\alpha}^{u_j} \tolong c_{r,\alpha}^u \quad \text{as} \ j \to \infty \qquad \text{for each $r > 0$},
        \]
        since strong convergence implies weak convergence. Moreover, we have
        \[
        \lim_{j \to \infty} \int_{\cl{B_{\rho}}\setminus B_{r}} \frac{|\Acal u_j|(z)}{|z|^{n-1+|\alpha|}} \dd z = \int_{\cl{B_{\rho}} \setminus B_{r}} \frac{1}{|z|^{n-1+|\alpha|}} \dd |\Acal u|(z),
        \]
        due to the strict convergence. Thus~\eqref{hey} indeed extends to all functions of bounded $\Acal$-variation.
    \end{proof}

    \subsection{Proof of Corollary~\ref{thm:cap_est}} 
    We claim that
    \[
    S_u \subset N_u \cup \Theta_u,
    \]	
    where $N_u$ is defined as in~\eqref{eq:capacityblowup}. In fact we show the stronger property that if 
    \[
    \int_{B_r(x)} \frac{|\Acal u|(y)}{|y - x|^{n-1}} \dd y < \infty \quad \text{for some $r \in \dist(x,\partial\Omega)$,}
    \]
    then $x \in S_u^\complement$.
        \begin{proof} Let $c_{x,r}^\alpha$ be the coefficients of Lemma \ref{L:1}.  First, we observe that $(c_{x,r}^0)_r$ is a Cauchy sequence in $V$. Indeed, this follows directly from the second assertion in Lemma~\ref{L:1} and the absolute continuity of the function
        \[
        \eta(r) \coloneqq \int_{B_r(x)} \frac{|\Acal u|(y)}{|y - x|^{n-1}} \dd y < \infty, \qquad 0 <  r < \dist(x,\partial\Omega).
        \]
        (Notice that in particular $\theta^{*(n-1)}(|\Acal u|,x) = 0$.)
        Therefore, there exists a fixed vector $c^0_x \in V$ such that 
        \begin{equation}\label{eq:YA}
        \lim_{r\todown 0} | c^0_{x,r} - c^0_x| = 0. 
        \end{equation}
        We will now show that $u$ is $L^{1^*}$-approximately continuous at $x$. We have 
        \begin{align*}
        \bigg(\aveint{B_r(x)}{}&|u - c^0_{x}|^{1^*} \bigg)^\frac{1}{1^*}\leq \bigg(\aveint{B_r(x)}{} |u -  c^0_{x,r}|^{1^*}\bigg)^\frac{1}{1^*} + \bigg(\aveint{B_r(x)}{} | c^0_{x,r} - c_x^0|^{1^*}\bigg)^\frac{1}{1^*} \\
        &\leq \bigg(\aveint{B_r(x)}{} |u - \Pcal_{x,r} u|^{1^*}\bigg)^\frac{1}{1^*} + \bigg(\aveint{B_r(x)}{} |\Pcal_{x,r} u -  c^0_{x,r}|^{1^*}\bigg)^\frac{1}{1^*} \\& \qquad + \bigg(\aveint{B_r(x)}{} | c^0_{x,r} - c^0_x|^{1^*}\bigg)^\frac{1}{1^*} \\
        &\stackrel{\eqref{eq:poincare1}}\lesssim_{n,\Acal} \frac{|\Acal u|(B_r(x))}{r^{n-1}} + \sum_{1 \leq |\alpha| \leq \ell - 1} r^{|\alpha|}|c_{x,r}^\alpha|  + \bigg(\aveint{B_r(x)}{} | c^0_{x,r} - c_x^0|^{1^*}\bigg)^\frac{1}{1^*}.
        \end{align*}
        The first assertion of Lemma~\ref{L:1} and~\eqref{eq:YA} imply
        \[
        \lim_{r \todown 0} \bigg(\aveint{B_r}{}|u - c_x^0|^{1^*}\bigg)^\frac{1}{1^*} = 0.
        \]
        This proves that $x \in S_u^\complement$ and the claim follows. 
        
        \proofstep{Proof of Corollary~\ref{cor:cap}.} Since every point in $N_u^\complement$ is a continuity point and $\Theta_u$ is a $\sigma$-finite set with respect to the $\Hcal^{n-1}$-measure,
               proving $\mathrm{Cap}_{n-1}(S_u) = 0$ reduces to checking that
        \[
        \mathrm{Cap}_{n-1}(S_u \cap \Theta_u^\complement) = 0.
        \]
        However, this follows as a trivial corollary of the fact that $\mathrm{Cap}_{n-1}(N_u) = 0$. This finishes the proof.
    \end{proof}

    \section{Proofs of the results for higher-order operators}
     \begin{proof}[Proof of Lemma~\ref{lem:diff}] Since $\Acal$ is elliptic and the statements are local, we can assume without loss of generality that $u$ has compact support in $\Omega$ (and hence also $U$). By extending $u$ and $U$ by zero, we can assume that $u \in BV^\Acal(\R^n)$ and $U \in BV^\Bcal(\R^n)$. Since $\Bcal$ is a first-order elliptic operator, it follows from~\cite{GR17}*{Theorem~1.1 and Lemma~3.1} that $U$ is $L^p$-differentiable almost everywhere for every $1 \le p < \sigma(1)$ (or $1 \le p \le \sigma(1)$ if $\Acal$ is complex-elliptic) and
    \begin{equation}\label{eq:1st_order}
        \frac{d\Bcal U}{d\Leb^n}(x) = B(\nabla U(x)) \quad \text{$\Leb^n$-almost everywhere.}
    \end{equation}
    Since the gradient operator is complex-elliptic, an iteration of Lemma~\cite{Alberti_Bianchini_Crippa_14}*{Lemma~4.6} yields that $u$ is $k$ times $L^p$-differentiable almost everywhere for all $1 \le p < \sigma(k)$ (or $1 \le p \le \sigma(k)$ if $\Acal$ is complex-elliptic). This proves (i) and (iii). From this discussion it also stems that $\nabla U(x) = \nabla^k u(x)$ for almost every $x$; an observation that will be used in the proof the (ii). 
    
    Being the classical $k$th order derivative of a $V$-valued polynomial, it follows that $\nabla^k u(x) \in V \otimes E_k(\R^n)$ almost everywhere. Now, let $R(DU) = \Curl_{k-1}$ be the jet notation for the $\Curl_{k-1}$ operator defined in the proof of Theorem~\ref{1.1} and observe that $R$ vanishes on $V \otimes E_{k-1}(\R^n)$. Inspecting the construction of $\Bcal$, we deduce that for almost every $x$ it holds
    \[
        B(\nabla U(x)) = B(\nabla^k u(x)) = \tilde A(\nabla U(x)) \oplus R(\nabla U(x)) = \tilde A(\nabla U(x)) \oplus 0_{W^\perp}\,.
    \]
    Projecting both sides of this identity onto its $W$ coordinate, we conclude that 
    \begin{equation}\label{eq:reduction}
    P_W B(\nabla U(x)) = \tilde A(\nabla^k u(x)) = A^k(\nabla^k u(x))\quad \text{$\Leb^n$-almost everywhere\,.}
    \end{equation}
    Here, in passing to the last equality we have used that the restriction of $\tilde A$ to $V \otimes E_k(\R^n)$ coincides with $A^k$. The first statement of Theorem~\ref{1.1} and the Radon--Nykodym theorem then give
    \[
        \frac{\Acal u}{\Leb^n}(x) = P_W \frac{\Bcal U}{\Leb^n}(x) \stackrel{\eqref{eq:1st_order}-\eqref{eq:reduction}} = A^k(\nabla^k u(x)) \quad \text{$\Leb^n$-almost everywhere.}
    \]
    This proves the second assertion. 
   \end{proof}

   \begin{proof}[Proof of Proposition~\ref{prop:jump}] The classical theory of BV spaces gives $\Hcal^{n-1}(J_{\nabla^r u}) = 0$ for all $1 \le r \le k-2$. Additionally, by Remark~\ref{rem:Giacomo}, $J_{\nabla^r u}$ is countably $\Hcal^{n-1}$-rectifiable. Therefore, applying Theorem~\ref{1.1} and the first assertion of Corollary~\ref{thm:jump2}, we conclude that
    $$|\mathcal{A} u|(J_{\nabla^r u}) = |P_W\mathcal{B} U|\mres J_{\nabla^r u} \equiv 0 \quad \text{for every } 1 \leq r \leq k-2.$$
    This completes the proof.
    \end{proof}

     \begin{proof}[Proof of Lemma~\ref{lem:jump2}]
    By definition, we have $\Acal^j u = \Bcal^j U = \Bbb(\xi)[U^+ - U^-] \, \Hcal^{n-1} \mres J_U$. 
    Since $(U^+,U^-,\nu)$ is a admissible triple $\Hcal^{n-1}$-almost everywhere on $J_U$, it follows from Theorem~\ref{1.1} point (ii) and~\eqref{eq:rk} that the identity $\Bbb(\xi)[U^+ - U^-] = \Abb^k(\xi)\dpr{U^+ - U^-,\nu^{\otimes^{k-1}}}_{k-1}$ holds everywhere on $J_U$. Casting this into the expression for $\Acal^j u$ above yields the sought assertion. 
    \end{proof}
    
\section{Proofs of additional properties of complex-elliptic operators}\label{s:other-prop}
\begin{proof}[Proof of Proposition \ref{prop:equiv}]
	Using that $K$ is closed and $\R^n \setminus K$ connected, the complex-ellipticity and $\Acal u \equiv 0$ imply that the precise representative of $u$ on $\Omega \setminus K$ is a polynomial $p \in V\otimes \R[x_1,\dots,x_n]$ in the kernel of $\Acal$ (see Sec.~\ref{sec:properties}). Since also $\Leb^n(K) = 0$, it follows that $u \equiv p \in (L^1_\loc \cap C^\infty)(\R^n;V)$ on $\R^n$. In particular, $u$ is continuously differentiable and satisfies $\Acal u(x) = 0$ for all $x \in \R^n$. 
\end{proof}

\begin{proof}[Proof of Proposition \ref{prop:alg}]
	We may without loss of generality assume that $V = \R^n$ and $W= \R^M$. Observe that the complex bilinear form $f$ associated to $\Acal$ is non-singular if and only if $\Acal$ is $\C$-elliptic. To see this, notice that there exists a bijective correspondence between first order complex-elliptic operators and non-singular bilinear maps from $\C \otimes V$ to $\C \otimes W$. Namely, for a first-order operator $\Acal$, we may write
	\begin{equation}\label{eq:bilin}
		f_\Abb(\xi,a) = \Abb(\xi)[a],
	\end{equation}
	and the complex-ellipticity of $\Acal$ tells us that
	\[
	f_\Abb(a,\xi) = 0 \quad \iff \quad a = 0 \; \text{or} \; \xi = 0.
	\]
	Proposition~1.3 in~\cite{larry} then implies $M + 1 \ge N + n$ as desired.	If $N=1$, the gradient operator $D = (\partial_1,\dots,\partial_n)$ attains the bound (the case for $n = 1$ is symmetric).
	To see that the bound is sharp for $N,n \ge 2$ we construct a non-singular bilinear form using Cauchy products: let $p,q \ge 1$ and consider the bilinear form 
	\[
	f(x,y) : \C^{p+1} \times \C^{q+1} \to \C^{p+q+1} : (x,y) \mapsto (z_0,\dots,z_{p+q}),
	\]
	where $x = (x_0,\dots,x_p)$, $y = (y_0,\dots,y_q)$ and
	\[
	z_m \coloneqq \sum_{r+s=m} x_ry_s.
	\]
	By an induction argument it is easy to verify that $f$ defines a complex non-singular bilinear form. Therefore, the partial differential operator $\Fcal: \Crm^\infty(\R^{p+1};\R^{q+1}) \to \Crm^\infty(\R^{p+1};\R^{p+q+1})$ whose symbol satisfies
	\[
	\Fbb(\xi)a = f(\xi,a), \quad (\xi,a) \in \R^{p+1} \times \R^{q+1},
	\]
	is $\C$-elliptic. 	
\end{proof}

    \section{Examples}\label{sct:ex}
    In this section we gather some known operators which satisfy the complex-ellipticity property.

    \begin{example}[Gradient] For $u: \R^n \to V$, the gradient operator
        \[
        D u = (\partial_1 u, \dots, \partial_n u),
        \]
        is a complex-elliptic operator. Indeed,  the symbol associated to $D$ is simply
        \[
        D(\xi)[a] = a \otimes \xi, \qquad a \in V, \ \xi \in \R^n,
        \]
        which has no complex non-trivial zeros.     \end{example}
    
    \begin{example}[Principal derivatives]
        For a scalar map $u$, the `diagonal' of the $k$\textsuperscript{th} order Hessian tensor
        \[
        u \mapsto \mathrm{diag}(D^k u) \coloneqq (\partial^k_1 u,\dots,\partial^k_n u)
        \]
        is complex-elliptic. This follows from the observation that   $\{\xi^k_1,\dots,\xi_n^k\}$ is a family of polynomials in $\C[\mathbf x]$ with no common non-trivial zeros. 
            \end{example}

    \begin{example}[Symmetric gradient, \cites{Suq,Tem_Str}] For vector-valued map $u: \Omega \subset \R^n \to \R^n$ we define its symmetric gradient
        \begin{align*}
        Eu & \coloneqq \frac12 (Du + Du^t) \\
        &  = \frac 12 (\partial_j u^i + \partial_i u^j)_{i,j} \qquad i,j = 1,\dots,n\,,
        \end{align*}
        which takes values on the space $E_2(\R^n)$ of symmetric bilinear forms of $\R^n$. Clearly,
        \[
        E(\xi)[a] = a \odot \xi.
        \]
        It is easy to check that $E$ is complex-elliptic because its nullspace on open connected sets is finite-dimensional. Indeed, it is well-known to be the space affine transformations $Rx + c$ where $R \in \R^n \otimes \R^n$ is a skew-symmetric matrix and $c \in \R^n$. See \cite{kohn1979new-estimates-f} and also \cite{hajlasz_on_appr_diff_bd_96} for more details on the symmetric gradient and its complex-ellipticity.
          \end{example}
    
    More generally, one may consider the symmetrization of the gradient of a symmetric $k$-tensor field:
    
    \begin{example}[Symmetric $k$-tensor field gradients, \cite{van-schaftingen2013limiting-sobole}]
        Let $k \in \N$. For a symmetric $k$-tensor $v \in E_k(\R^n)$, we define the operator with symbol $E^k(\xi)[v] \in E_{k+1}(\R^n)$, where
        \[
        E^k(\xi)v[a_1,...,a_{k+1}] \coloneqq \frac{1}{(k+1)!}\sum_{\sigma \in S_{k+1}} (\xi\cdot a_{\sigma(k+1)})v[a_{\sigma(1)},...,a_{\sigma(k)}].
        \]
        Thus, for a $k$-tensor-field $u : \R^n \to E_k(\R^n)$, we have
        \[
        E^ku = \sym^{k+1}(D u),
        \]
        where $\sym^\ell(v)$ denotes the $\ell$-symmetrization of an $\ell$-tensor $v \in E_\ell(\R^n)$. We recall the argument in~\cite{van-schaftingen2013limiting-sobole}*{Proposition~6.5}, which extends to complex variables: Let $\xi \in \C^n\setminus\{0\}$ and let $v \in \sym_{k}(\C^n)$. Then $E^k(\xi)[v] = 0$ implies 
        $E^k(\xi)v[a,\dots,a] = 0$ for all $a \in \C^n$. This however implies
        \[
        v[a,\dots,a] = 0 \qquad \text{for all $a \in \C^n$ with $a \cdot \xi \neq 0$.}
        \]
        This implies that $v = 0$. 
    \end{example}
    
    \begin{example}[Deviatoric operator, \cite{Resh}]\label{ex:dev} The operator  that considers only the shear part of the symmetric gradient:
        \[
        Lu = Eu - \frac{\Div(u)}{n}I_n 
        \]	
        is elliptic. If $n \ge 3$, then it is also $\C$-elliptic. 
    \end{example}
    \begin{proof} 
        $\Lbb(\xi)a = 0$ if and only if $n(a \odot \xi) = (a \cdot \xi) I_n$. Since $\rank_{\C}(a \odot \xi) \le 2$ for all $\xi,a \in \C^n$, the operator is clearly complex-elliptic whenever $n \ge 3$ (and elliptic for all $n \ge 1$ because its real-eigenvalues $\lambda_1,\lambda_2$ satisfy $\lambda_1 \times \lambda_2 \le 0$).  
        To see that indeed it fails to be complex-elliptic for $n = 2$, we observe that in this case we may re-write $Lu$ as
        \[
        Lu = \curl(u_2,u_1)\frac{(e_1 \otimes e_1 - e_2 \otimes e_2)}2 + \Div(u_2,u_1) e_1 \odot e_2.
        \]
        Therefore, the equation $Lu = 0$ is equivalent to the div-curl equations, which are associated with the symbol $\tilde \Lbb(\xi)[v] = (\xi \cdot v, \xi^\perp \cdot v)$. The latter, however, is not complex-elliptic since $\tilde L(1,\mathrm i)[1,\mathrm i] = 0$.   \end{proof}

    \begin{example}\label{ex:cauchy}
        The operator $\Ccal : \Crm^\infty(\R^n;\R^n) \to \Crm^\infty(\R^n;\R^{N + n-1})$ defined by 
        \[
        \Ccal u = \bigg(\sum_{r + s = m} \partial_r  u^{s}\bigg)_{m}, \qquad m -1 = 1,\dots,n+ N -1\,,
        \]	
        is complex-elliptic (see Proposition~\ref{prop:alg}). 
    \end{example}
\appendix
      \section{A useful factorization for higher-order operators}\label{sec:h} 
    
    The goal of this section is to introduce a factorization for higher-order operators that reduces the complexity of analyzing $k$th-order operators to that of first-order methods. 
    The idea of the construction is to factorize a $k$th order operator $\mathcal A(D)$ as 
    \[
        \Acal(D) = L \circ \mathcal B(D) \circ D^{k-1}\,,
    \]
    where $L$ is a linear map and $\mathcal B(D)$ is a first-order operator with a comparable symbol in the following sense: if $\xi \in \R^n$, then
    \[
       \ker \Bbb(\xi) = \ker \Abb^k (\xi) \otimes V_\xi^{k-1}\,, \qquad  V_\xi^r \coloneqq \spn \{\xi^{\otimes^{r}}\}
    \]
    Up to linear isomorphism, the mapping $\mathcal A(D) \mapsto \Bcal(D)$ is one to one and has the crucial property to retain ellipticity, constant rank, and other relevant conditions attached to the symbol's kernel. 
    
    A simple way to visualize the construction is to recall the factorization of the Laplacian in terms of the divergence operator $\Delta u = \diverg (Du)$. In this example, one may consider the first-order operator 
    \[
        \mathcal B(D)w \coloneqq \diverg w \oplus \curl w\,, \qquad w : \R^n \to \R^n\,,
    \]
    and $L: \R \times \R^{\binom{n}{2}}$ to be the projection on the first coordinate.

 The general construction is recorded in the following result:

    \begin{theorem}[Order reduction]\label{1.1} Let $k \ge 2$ and let $\Acal$ be a $k$th-order homogeneous operator on $\R^n$-variables, from a space $V$ to a space $W$. Then, there exists a first-order operator $\Bcal$ on $\R^n$-variables, from $V \otimes E_{k-1}(V)$ to a vector space $Z$ containing $W$, satisfying the following properties:\vskip8pt 
    \begin{enumerate}[label={(\roman*)}] \setlength{\itemsep}{1em}
    \item Functional factorization: $\Acal$ can be decomposed as 
    \begin{equation*}
        \Acal(D) = P_W \circ \Bcal(D) \circ D^{k-1}
    \end{equation*}
     where $P_W : Z \to W$ denotes the canonical linear projection.

    \item Algebraic stability: If $\Kbb \in \{\R,\C\}$, then
    \begin{equation*}
        \Abb^k(\xi)[v] = P_W \circ \Bbb(\xi) [v \otimes \xi^{\otimes^{k-1}}] \qquad \text{for all} \; (\xi,v) \in \Kbb^n \times (\Kbb \otimes V)\,.
    \end{equation*}
    \item Kernel stability: If $\Kbb \in \{\R,\C\}$, then 
    \[
        \ker_\Kbb \Bbb(\xi) = \ker_\Kbb \Abb^k(\xi)  \otimes \spn_\Kbb \{\xi^{\otimes^{k-1}}\}\qquad \text{for all $\xi \in \Kbb^n\,.$}
    \]
    \end{enumerate}
    \end{theorem}

    \begin{remark}\label{rem:stabil} Our order-reduction decomposition remains invariant under several useful symbolic manipulations. We record the following some important examples with $\Kbb = \{\R,\C\}$:  \vskip1em
    \begin{enumerate}[label={(\alph*)}]\setlength{\itemsep}{1em}
        \item $\Acal$ is $\Kbb$-elliptic if and only if $\Bcal$ is $\Kbb$-elliptic: for every $\xi \in \Kbb^n$ we have
        \[
            \ker_\Kbb \Abb^k(\xi) = \{0\} \quad \Longleftrightarrow \quad \ker_\Kbb \Bbb(\xi)= \{0\}\,.
        \]
        \item More generally, the $\Kbb$-rank of the symbol is invariant, i.e., 
         \[
            \rank_\Kbb (\Abb^k(\xi)) = r \quad \Longleftrightarrow \quad  \rank_\Kbb (\Bbb(\xi)) = r\,.
        \]
        \item $\Acal$ is boundary elliptic in the direction $\nu \in \mathbb S^{n-1}$ if and only if $\Bcal$ is boundary elliptic in the same direction: for every $\xi \in \R^n$ it holds 
        \[
            \ker_\C \Abb^k(\xi + \mathrm i \nu) = \{0\}\quad \Longleftrightarrow \quad  \ker_\C \Bbb(\xi + \mathrm i \nu)= \{0\}\,.
        \]
        \item $\Acal$ is cocanceling  if and only if $\Bcal$ is cocanceling , i.e., 
        \[
            \bigcap_{\xi \in S^{n-1}} \ker \Abb^k(\xi) = \{0\} \quad \Longleftrightarrow \quad \bigcap_{\xi \in S^{n-1}} \ker \Bbb(\xi) = \{0\}\,.
        \]
        \item Other mixing conditions related to the symbol's kernel remain invariant under the order-reduction procedure. In particular, the mixing conditions introduced in~\cites{AR-PAMS,arroyo2020slicing,GAFA}, concerning the fine structure of PDE-constrained measures, also remain invariant. 
         \end{enumerate}
    \end{remark}

   \subsection{Applications to theory of higher-order operators}Theorem~\ref{1.1} enables the analysis of higher-order operators by leveraging insights from the simpler analysis of first-order operators. This order reduction translates to a significant simplification for various problems, as evidenced by the substantial effort required to extend first-order results to the higher-order setting in several works: \vskip1em
   \begin{enumerate}[left=0.3em,itemsep=1em]      
   \item[$\bullet$] The $L^1$ estimates for vector fields under higher-order differential conditions (\cites{VS0,VS1,van-schaftingen2013limiting-sobole}), which Van Schaftingen established as a generalization of the renowned Bourgain--Brezis estimates (\cites{BB1,BB2}).
       \item[$\bullet$] The estimates for higher-order operators established in~\cite{GR17} can be seen to follow directly from the existence of traces for first-order complex-elliptic operators in~\cite{breit2017traces}. 
       \item[$\bullet$] The existence of traces for higher-order operators from~\cite{VS2}*{Theorem~5.2} (see also~\cite{VS2}*{Open problem~5.3} in that reference) follow from the order-reduction and the results in~\cite{breit2017traces}.
   \item[$\bullet$] Theorem~\ref{1.1} provides new insight about the validity of Sobolev inequalities on half-spaces. More precisely, in~\cite{gmeineder2024boundary}*{Theorem 1.1} it was shown that if $\Acal$ is $k$th order operator, then $\Acal$ is elliptic \emph{and} boundary elliptic in the direction $\nu \in S^{n-1}$ if and only if for all $u \in C^\infty_c(\R^n;V)$
   \begin{equation}\label{eq:poincare}
        \|D^{k-1} u\|_{L^\frac{n}{n-1}(H^+_\nu)} \le c \| \Acal u\|_{L^1(H^+_\nu)}\,.
   \end{equation}
   Here, 
    $H_\nu$ is the half-space in $\R^n$ orthogonal to $\nu$. The authors established the Sobolev estimate~\eqref{eq:poincare} for $k>1$, by proving the  \emph{boundary ellipticity} is equivalent with a \emph{trace estimate} (cf.~\cite{gmeineder2024boundary}*{Theorem 1.3})
    \begin{equation}\label{eq:poincare2}
   \begin{aligned}
        \|\partial_\nu^{k-1}  u \|_{L^1(H_\nu)}  \; + \; &  \\ 
        \{\textrm{Besov trace-norms on lower order}& \; \partial_\nu^{k-1 -j} \} 
    \end{aligned}
    \; \le \;  c \|\Acal u\|_{L^1(H_\nu^+)}\,.
    \end{equation}
      Theorem~\ref{1.1} and Remark~\ref{rem:stabil} reveal that    it is possible to give a proof of~\eqref{eq:poincare} 
   without passing through~\eqref{eq:poincare2}. In fact, our reduction argument establishes directly (through~\cite{gmeineder2024boundary}*{Theorem 1.2}) that the trace estimate 
   \[
        \|D^{k-1}  u\|_{L^1(H_\nu)} \le c \|\mathcal Au\|_{L^1(H_\nu^+)}
   \]
   is a \emph{necessary} condition for the boundary ellipticity of $\Acal$ with respect to $\nu$. 
   Establishing the necessity of the lower order Besov norms (shown in~\cite{gmeineder2024boundary}*{Theorem 1.2}) seems to require an additional (and perhaps nontrivial) argument that does not follow directly from our order-reduction analysis.
   \end{enumerate}

  \subsection{Proof of Theorem~\ref{1.1}}   The proof is divided into the following steps: 

    \emph{1. Construction of $\Bcal$.}
        We write $\Acal$ in jet-notation, that is, $\Acal = A[D^ku]$ where $A : V \otimes E_k(\R^n) \to W$ is a linear map. Since $A$ acts linearly on $D^k u$, we can further decompose $\Acal u$ as
        \begin{equation}\label{eq:first_id}
            \Acal u = A[D \circ D^{k-1}u] = \tilde \Acal \circ D^{k-1}u\,,
        \end{equation}
        where $\tilde \Acal$ is a first-order operator from $V \otimes E_{k-1}(\R^n)$ to $W$. Let us now consider the vector space $Z \coloneqq W \oplus (V \otimes^k \R^n)$ and the first-order operator, from $V \otimes E_{k-1}(\R^n)$ to $Z$, defined by 
        \[
            \Bcal  \coloneqq  \tilde \Acal \oplus \Curl_{k-1}.
        \]
        Here, for a positive integer $m$, the operator $\Curl_m$ is the first-order operator from $V \otimes E_m(\R^n)$ to $V \otimes^k \R^n$ defined as (see~\cite{FM99}*{Remark~3.3(iii)})
        \[
        \big(\!\Curl_m w\big)_j = \partial_i w^j_{\beta+ e_\ell} - \partial_\ell w^j_{\beta + e_i} \quad 1 \le j \le M,\, \beta \in \N^n,\, |\beta| = m-1.
        \]
        In  particular, since $\Curl_{k-1} \circ D^{k-1} \equiv 0$, we conclude that
        \[
            P_W \circ \Bcal \circ D^{k-1} = P_W (\tilde \Acal \circ D^{k-1} \oplus 0 ) = \tilde \Acal \circ D^{k-1} = \Acal\,.
        \]
        This proves (i) and also (ii) by taking applying the Fourier transform to the same identity.

        \emph{2. Characterizing the kernel of $\Curl_m$.} We claim that
        \[
        \ker_\Kbb (\Curl_m)(\xi) = \setb{v \otimes \xi^{\otimes^m}}{v \in \Kbb \otimes V, \ \xi \in \Kbb^n}\,.
        \]
        Fix a non-zero direction $\xi \in \Kbb^n$  and a  tensor $F \in \ker_\Kbb(\Curl_m)(\xi)$. Then by definition we obtain
        \[
        \xi_i F^j_{\beta + e_\ell} = \xi_\ell F^j_{\beta + e_i}, \quad
        1 \le j \le M,\, \beta \in \N^n,\, |\beta| = m-1.  
        \]
        Since $\xi$ is non-zero, we may find an index $1 \le i \le n$ such that $\xi_{i} \in \Kbb$ is non-zero. Hence, for this $\xi_i$ we obtain the relations
        \begin{equation}\label{eq:curl}
        F^j_{\beta + e_\ell} = \xi_\ell \bigg(\frac{F^j_{\beta + e_i}}{\xi_{i}}\bigg) \eqqcolon \xi_\ell E^j_{\beta},
        \end{equation}
        for all $\ell \in \{1,\dots,n\}\setminus \{i\}$, $1 \le j \le N$, and all multi-indexes $\beta \in \Nbb^n$ with $|\beta| =m$. It is easy to check that $E^j_\beta$ is indeed independent of the choice of index $i$.
         Furthermore, from the symmetries of $F$ we deduce that 
        $F = E \otimes \xi$ for some $E \in \Kbb \otimes V$. 
        The fact that~\eqref{eq:curl} holds with $\beta = \gamma + e_\ell$ for all multi-indexes $\gamma \in \Nbb^n$ with modulus $|\gamma| = m-2$ yields (again, by exploiting the symmetries of $F$) that $\Curl_{m - 1} E = 0$. 
        Therefore, after a suitable inductive argument over $m$, we finally obtain that $F$ has the form 
        \[
        F = v \otimes \xi^{\otimes^m} \quad \text{for some $v \in \Kbb \otimes V$}.
        \]
        This proves the claim. 

         \emph{3. Establishing the symbolic identity.} We now turn back to the computation of $\ker_\Kbb \Bbb(\xi)$. By the universal property of direct products, we have that that 
        \begin{equation}\label{eq:prod}
        \ker_\Kbb \Bbb(\xi) = \ker_\Kbb \tilde \Abb (\xi) \cap \ker_\Kbb (\Curl_{k-1}) (\xi).
        \end{equation} The second step tells us that we may restrict to elements $F =  v \otimes \xi^{\otimes^{k-1}} $ when computing $\ker_\Kbb \Bbb (\xi)$. With this mind, let $\xi \in \Kbb^n$ be fixed. We get 
        \begin{align*}
            \ker_\Kbb \Bbb(\xi) & = \set{v \otimes \xi^{\otimes^{k-1}}}{v \in \Kbb \otimes V , \, \tilde \Abb(\xi)[v \otimes  \xi^{\otimes^{k-1}}] = 0} \\
            &  = \set{v \otimes \xi^{\otimes^{k-1}}}{v \in \Kbb \otimes V , \, \tilde \Abb(\xi) \circ D^{k-1}(\xi)[v] = 0} \\
            &  = \set{v \otimes \xi^{\otimes^{k-1}}}{v \in \Kbb \otimes V , \,  \Abb^k(\xi)[v] = 0} = \ker_{\Kbb} \Abb^k(\xi) \otimes \spn_\Kbb\{\xi^{\otimes^{k-1}}\}\,.
        \end{align*}
        Here, we have used~\eqref{eq:first_id} in passing to the one but last identity. This proves (iii). This finishes the proof.

    \subsection*{Data Availability Statement}
    Research data/code associated with this article are available in arXiv, under the reference https://doi.org/10.48550/arXiv.1911.08474\,.

    \begin{bibdiv}
        \begin{biblist}
            
            \bib{Alberti_Bianchini_Crippa_14}{article}{
             author={Alberti, Giovanni},
 author={Bianchini, Stefano},
 author={Crippa, Gianluca},
 issn={0213-2230},
 issn={2235-0616},
 doi={10.4171/RMI/782},
 review={Zbl 1296.26042},
 title={On the {{\(L^p\)}}-differentiability of certain classes of functions},
 journal={Revista Matem{\'a}tica Iberoamericana},
 volume={30},
 number={1},
 pages={349--367},
 date={2014},
 publisher={EMS Press, Berlin},
}
            
            \bib{ambrosio1997fine-properties}{article}{
                author={Ambrosio, Luigi},
 author={Coscia, Alessandra},
 author={Dal Maso, Gianni},
 issn={0003-9527},
 issn={1432-0673},
 doi={10.1007/s002050050051},
 review={Zbl 0890.49019},
 title={Fine properties of functions with bounded deformation},
 journal={Archive for Rational Mechanics and Analysis},
 volume={139},
 number={3},
 pages={201--238},
 date={1997},
 publisher={Springer, Berlin/Heidelberg},
}
            
            \bib{maly}{article}{
               author={Ambrosio, Luigi},
 author={De Lellis, Camillo},
 author={Mal{\'y}, Jan},
 isbn={978-0-8218-4190-7},
 book={
 title={Perspectives in nonlinear partial differential equations in honor of Ha\"{\i}m Brezis. Based on the conference celebration of Ha\"{\i}m Brezis' 60th birthday, June 21--25, 2004},
 publisher={Providence, RI: American Mathematical Society (AMS)},
 },
 review={Zbl 1200.49043},
 title={On the chain rule for the divergence of BV-like vector fields: applications, partial results, open problems},
 pages={31--67},
 date={2007},
}
            
            \bib{AFP2000}{book}{
                author={Ambrosio, Luigi},
 author={Fusco, Nicola},
 author={Pallara, Diego},
 isbn={0-19-850245-1},
 book={
 title={Functions of bounded variation and free discontinuity problems},
 publisher={Oxford: Clarendon Press},
 },
 review={Zbl 0957.49001},
 title={Functions of bounded variation and free discontinuity problems},
 series={Oxford Mathematical Monographs},
 pages={xviii + 434},
 date={2000},
 publisher={Oxford University Press, Oxford},
}

            \bib{APR_Critical}{article}{
   author={Ambrosio, Luigi},
 author={Ponce, Augusto C.},
 author={Rodiac, R{\'e}my},
 issn={0213-2230},
 issn={2235-0616},
 doi={10.4171/rmi/1190},
 review={Zbl 1459.42017},
 title={Critical weak-{{\(L^p\)}} differentiability of singular integrals},
 journal={Revista Matem{\'a}tica Iberoamericana},
 volume={36},
 number={7},
 pages={2033--2072},
 date={2020},
 publisher={EMS Press, Berlin},
}

            \bib{aron}{article}{
                author={Aronszajn, Nachman},
                title={On coercive  integro-differential  quadratic forms},
                journal={Conference on Partial Differential Equations, University of Kansas},
                date={1954},
                number={14},
                pages={94--106},
            }

            \bib{AR-PAMS}{article}{
                author={Arroyo-Rabasa, Adolfo},
 issn={0002-9939},
 issn={1088-6826},
 doi={10.1090/proc/14732},
 review={Zbl 1447.28003},
 title={An elementary approach to the dimension of measures satisfying a first-order linear PDE constraint},
 journal={Proceedings of the American Mathematical Society},
 volume={148},
 number={1},
 pages={273--282},
 date={2020},
 publisher={American Mathematical Society (AMS), Providence, RI},
}

            \bib{arroyorabasa2019characterization}{article}{
 author={Arroyo-Rabasa, Adolfo},
 issn={0003-9527},
 issn={1432-0673},
 doi={10.1007/s00205-021-01683-y},
 review={Zbl 1478.49041},
 title={Characterization of generalized Young measures generated by {{\({\mathcal{A}} \)}}-free measures},
 journal={Archive for Rational Mechanics and Analysis},
 volume={242},
 number={1},
 pages={235--325},
 date={2021},
 publisher={Springer, Berlin/Heidelberg},
}

\bib{Arroyo_elem}{article}{
 author={Arroyo-Rabasa, Adolfo},
 issn={0002-9939},
 issn={1088-6826},
 doi={10.1090/proc/14732},
 review={Zbl 1447.28003},
 title={An elementary approach to the dimension of measures satisfying a first-order linear PDE constraint},
 journal={Proceedings of the American Mathematical Society},
 volume={148},
 number={1},
 pages={273--282},
 date={2020},
 publisher={American Mathematical Society (AMS), Providence, RI},
}
            
            \bib{arroyo2020slicing}{article}{
                title={Slicing and fine properties for functions with bounded $\mathcal A$-variation}, 
                author={Arroyo-Rabasa, Adolfo},
                year={2020},
                eprint={2009.13513},
                archivePrefix={arXiv},
                primaryClass={math.AP}
            }
            
            \bib{GAFA}{article}{
             author={Arroyo-Rabasa, Adolfo},
 author={De Philippis, Guido},
 author={Hirsch, Jonas},
 author={Rindler, Filip},
 issn={1016-443X},
 issn={1420-8970},
 doi={10.1007/s00039-019-00497-1},
 review={Zbl 1420.49046},
 title={Dimensional estimates and rectifiability for measures satisfying linear PDE constraints},
 journal={Geometric and Functional Analysis. GAFA},
 volume={29},
 number={3},
 pages={639--658},
 date={2019},
 publisher={Springer (Birkh{\"a}user), Basel},
}

                      \bib{BB1}{article}{
  author={Bourgain, Jean},
 author={Brezis, Ha{\"{\i}}m},
 issn={1631-073X},
 doi={10.1016/j.crma.2003.12.031},
 review={Zbl 1101.35013},
 title={New estimates for the Laplacian, the div-curl, and related Hodge systems},
 journal={Comptes Rendus. Math{\'e}matique. Acad{\'e}mie des Sciences, Paris},
 volume={338},
 number={7},
 pages={539--543},
 date={2004},
 publisher={Acad{\'e}mie des Sciences, Paris},
}

\bib{BB2}{article}{
    author={Bourgain, Jean},
 author={Brezis, Ha{\"{\i}}m},
 issn={1435-9855},
 issn={1435-9863},
 doi={10.4171/JEMS/80},
 review={Zbl 1176.35061},
 title={New estimates for elliptic equations and Hodge type systems},
 journal={Journal of the European Mathematical Society (JEMS)},
 volume={9},
 number={2},
 pages={277--315},
 date={2007},
 publisher={EMS Press, Berlin},
}

            \bib{breit2017traces}{article}{
                author={Breit, Dominic},
 author={Diening, Lars},
 author={Gmeineder, Franz},
 issn={2157-5045},
 issn={1948-206X},
 doi={10.2140/apde.2020.13.559},
 review={Zbl 1450.46017},
 title={On the trace operator for functions of bounded {{\(\mathbb{A} \)}}-variation},
 journal={Analysis \& PDE},
 volume={13},
 number={2},
 pages={559--594},
 date={2020},
 publisher={Mathematical Sciences Publishers (MSP), Berkeley, CA},
}

            \bib{Caccioppoli_52_I}{article}{
                Author = {Caccioppoli, Renato},
 Title = {Misura e integrazione sulle variet{\`a} parametriche. {I}},
 FJournal = {Atti della Accademia Nazionale dei Lincei. Serie Ottava. Rendiconti. Classe di Scienze Fisiche, Matematiche e Naturali},
 Journal = {Atti Accad. Naz. Lincei, VIII. Ser., Rend., Cl. Sci. Fis. Mat. Nat.},
 ISSN = {0392-7881},
 Volume = {12},
 Pages = {219--227},
 Year = {1952},
 Language = {Italian},
 zbMATH = {3074979},
 Zbl = {0048.03705}
}
            
            \bib{Caccioppoli_52_II}{article}{
        Author = {Caccioppoli, Renato},
 Title = {Misura e integrazione sulle variet{\`a} parametriche. {II}},
 FJournal = {Atti della Accademia Nazionale dei Lincei. Serie Ottava. Rendiconti. Classe di Scienze Fisiche, Matematiche e Naturali},
 Journal = {Atti Accad. Naz. Lincei, VIII. Ser., Rend., Cl. Sci. Fis. Mat. Nat.},
 ISSN = {0392-7881},
 Volume = {12},
 Pages = {365--373},
 Year = {1952},
 Language = {Italian},
 zbMATH = {3074979},
 Zbl = {0048.03705}
}
            
            \bib{chen}{article}{
                author={Chen, Gui-Qiang},
 author={Torres, Monica},
 issn={0003-9527},
 issn={1432-0673},
 doi={10.1007/s00205-004-0346-1},
 review={Zbl 1073.35156},
 title={Divergence-measure fields, sets of finite perimeter, and conservation laws},
 journal={Archive for Rational Mechanics and Analysis},
 volume={175},
 number={2},
 pages={245--267},
 date={2005},
 publisher={Springer, Berlin/Heidelberg},
}
            
            \bib{ChaCri18b}{misc}{
                author={Chambolle, Antonin},
 author={Crismale, Vito},
 issn={1864-8258},
 issn={1864-8266},
 doi={10.1515/acv-2019-0018},
 review={Zbl 1476.49018},
 title={Phase-field approximation for a class of cohesive fracture energies with an activation threshold},
 journal={Advances in Calculus of Variations},
 volume={14},
 number={4},
 pages={475--497},
 date={2021},
 publisher={De Gruyter, Berlin},
}
            
            \bib{CFM05}{article}{
                author={Conti, Sergio},
 author={Faraco, Daniel},
 author={Maggi, Francesco},
 issn={0003-9527},
 issn={1432-0673},
 doi={10.1007/s00205-004-0350-5},
 review={Zbl 1080.49026},
 title={A new approach to counterexamples to {{\(L^1\)}} estimates: Korn's inequality, geometric rigidity, and regularity for gradients of separately convex functions},
 journal={Archive for Rational Mechanics and Analysis},
 volume={175},
 number={2},
 pages={287--300},
 date={2005},
 publisher={Springer, Berlin/Heidelberg},
}

            \bib{Dain}{article}{
  author={Dain, Sergio},
 issn={0944-2669},
 issn={1432-0835},
 doi={10.1007/s00526-005-0371-4},
 review={Zbl 1091.35097},
 title={Generalized Korn's inequality and conformal Killing vectors},
 journal={Calculus of Variations and Partial Differential Equations},
 volume={25},
 number={4},
 pages={535--540},
 date={2006},
 publisher={Springer, Berlin/Heidelberg},
}
            
            \bib{de-giorgi1961frontiere-orien}{book}{
                author={De Giorgi, Ennio},
                title={Frontiere orientate di misura minima},
                series={Seminario di Matematica della Scuola Normale Superiore di Pisa,
                    1960-61},
                publisher={Editrice Tecnico Scientifica, Pisa},
                date={1961},
            }
            
            \bib{de-giorgi-su-una-teoria-1954}{article}{
                author={De Giorgi, Ennio},
 issn={0373-3114},
 issn={1618-1891},
 doi={10.1007/BF02412838},
 review={Zbl 0055.28504},
 language={Italian},
 title={Su una teoria generale della misura {{\((r-1)\)}}-dimensionale in uno spazio ad {{\(r\)}} dimensioni},
 journal={Annali di Matematica Pura ed Applicata. Serie Quarta},
 volume={36},
 pages={191--213},
 date={1954},
 publisher={Springer, Berlin/Heidelberg; Fondazione Annali di Matematica Pura ed Applicata c/o Dipartimento di Matematica ``U. Dini'', Firenze},
}
            
            \bib{de-giorgi-nuovi-teoremi-1955}{article}{
                 author={De Giorgi, Ennio},
 issn={0035-5038},
 issn={1827-3491},
 review={Zbl 0066.29903},
 language={Italian},
 title={Nuovi teoremi relativi alle misure {{\((r - 1)\)}}-dimensionali in uno spazio ad {{\(r\)}} dimensioni},
 journal={Ricerche di Matematica},
 volume={4},
 pages={95--113},
 date={1955},
 publisher={Springer, Milan; Universit{\`a} degli Studi di Napoli ``Federico II'', Naples},
}
            
            \bib{de-giorgi-sulla-proprieta-1958}{article}{
             author={De Giorgi, Ennio},
 issn={0365-0286},
 review={Zbl 0116.07901},
 language={Italian},
 title={Sulla proprieta isoperimentrica dell'ipersfera, nella classe degli insiemi aventi frontiera orientata di misura finita},
 journal={Atti della Accademia Nazionale dei Lincei. Memorie. Classe di Scienze Fisiche, Matematiche e Naturali. Serie VIII. Sezione I (Matematica, Meccanica, Astronomia, Geodesia e Geofisica)},
 volume={5},
 pages={33--44},
 date={1958},
 publisher={Accademia Nazionale dei Lincei, Roma},
}

            \bib{del2020rectifiability}{article}{author={Del Nin, Giacomo},
 issn={0391-173X},
 issn={2036-2145},
 doi={10.2422/2036-2145.202002\_006},
 review={Zbl 1489.26015},
 title={Rectifiability of the jump set of locally integrable functions},
 journal={Ann. Sc. Norm. Super. Pisa, Cl. Sci. (5)},
 volume={22},
 number={3},
 pages={1233--1240},
 date={2021},
 publisher={Scuola Normale Superiore, Pisa},
}

            \bib{diening2019continuity}{article}{
                author={Diening, Lars},
 author={Gmeineder, Franz},
 issn={0033-5606},
 issn={1464-3847},
 doi={10.1093/qmathj/haaa027},
 review={Zbl 1467.31001},
 title={Continuity points via Riesz potentials for {{\(\mathbb{C}\)}}-elliptic operators},
 journal={The Quarterly Journal of Mathematics},
 volume={71},
 number={4},
 pages={1201--1218},
 date={2020},
 publisher={Oxford University Press, Oxford},
}
            \bib{DG_Sharp}{article}{
  title={Sharp trace and Korn inequalities for differential operators},
  author={L. Diening and F. Gmeineder},
  journal={arXiv preprint arXiv:2105.09570},
  year={2021}
}

            \bib{evans1992}{book}{
                author={Evans, Lawrence Craig},
 author={Gariepy, Ronald F.},
 isbn={978-1-4822-4238-6},
 isbn={978-1-4822-4240-9},
 book={
 title={Measure theory and fine properties of functions},
 publisher={Boca Raton, FL: CRC Press},
 },
 review={Zbl 1310.28001},
 title={Measure theory and fine properties of functions},
 edition={2nd revised ed.},
 series={Textbooks in Mathematics},
 pages={xiv + 299},
 date={2015},
 publisher={CRC Press, Boca Raton, FL},
}
            
            \bib{federer-note-on-gauss-green-1958}{article}{
                author={Federer, Herbert},
 issn={0002-9939},
 issn={1088-6826},
 doi={10.2307/2033002},
 review={Zbl 0087.27302},
 title={A note on the Gauss-Green theorem},
 journal={Proceedings of the American Mathematical Society},
 volume={9},
 pages={447--451},
 date={1958},
 publisher={American Mathematical Society (AMS), Providence, RI},
}
            
            \bib{federer1969geometric-measu}{book}{
                Author = {Federer, Herbert},
 Title = {Geometric measure theory},
 FSeries = {Grundlehren der Mathematischen Wissenschaften},
 Series = {Grundlehren Math. Wiss.},
 ISSN = {0072-7830},
 Volume = {153},
 Year = {1969},
 Publisher = {Springer, Cham},
 Language = {English},
 Keywords = {49-01,49Q15,28A75,58A15,58A25,49Q20,58C35},
 zbMATH = {3280855},
 Zbl = {0176.00801}
}
            
            \bib{Federer1972_slices_and_potentials}{article}{
                 author={Federer, Herbert},
 issn={0022-2518},
 doi={10.1512/iumj.1971.21.21029},
 review={Zbl 0256.49047},
 title={Slices and potentials},
 journal={Indiana University Mathematics Journal},
 volume={21},
 pages={373--382},
 date={1971},
 publisher={Indiana University, Department of Mathematics, Bloomington, IN},
}
            
            \bib{Fleming_Rishel_60}{article}{
                author={Fleming, Wendell H.},
 author={Rishel, Raymond},
 issn={0003-889X},
 issn={1420-8938},
 doi={10.1007/BF01236935},
 review={Zbl 0094.26301},
 title={An integral formula for total gradient variation},
 journal={Archiv der Mathematik},
 volume={11},
 pages={218--222},
 date={1960},
 publisher={Springer (Birkh{\"a}user), Basel},
}

            \bib{FM99}{article}{
   author={Fonseca, Irene},
 author={M{\"u}ller, Stefan},
 issn={0036-1410},
 issn={1095-7154},
 doi={10.1137/S0036141098339885},
 review={Zbl 0940.49014},
 title={{{\(\mathcal A\)}}-quasiconvexity, lower semicontinuity, and Young measures},
 journal={SIAM Journal on Mathematical Analysis},
 volume={30},
 number={6},
 pages={1355--1390},
 date={1999},
 publisher={Society for Industrial and Applied Mathematics (SIAM), Philadelphia, PA},
}

            \bib{GR17}{article}{
   author={Gmeineder, Franz},
 author={Rai{\c{t}}{\u{a}}, Bogdan},
 issn={0022-1236},
 doi={10.1016/j.jfa.2019.108278},
 review={Zbl 1440.46031},
 title={Embeddings for {{\(\mathbb{A}\)}}-weakly differentiable functions on domains},
 journal={Journal of Functional Analysis},
 volume={277},
 number={12},
 pages={33},
 note={Id/No 108278},
 date={2019},
 publisher={Elsevier, Amsterdam},
}

\bib{VS2}{article}{
   author={Gmeineder, Franz},
 author={Rait{\u{a}}, Bogdan},
 author={Van Schaftingen, Jean},
 issn={0022-2518},
 doi={10.1512/iumj.2021.70.8682},
 review={Zbl 1493.46054},
 title={On limiting trace inequalities for vectorial differential operators},
 journal={Indiana University Mathematics Journal},
 volume={70},
 number={5},
 pages={2133--2176},
 date={2021},
 publisher={Indiana University, Department of Mathematics, Bloomington, IN},
}

            \bib{gmeineder2024boundary}{article}{
  author={Gmeineder, Franz},
 author={Rai{\c{t}}{\u{a}}, Bogdan},
 author={Van Schaftingen, Jean},
 issn={0001-8708},
 doi={10.1016/j.aim.2024.109490},
 review={Zbl 07807342},
 title={Boundary ellipticity and limiting {{\(\mathrm{L}^1\)}}-estimates on halfspaces},
 journal={Advances in Mathematics},
 volume={439},
 pages={25},
 note={Id/No 109490},
 date={2024},
 publisher={Elsevier (Academic Press), San Diego, CA},
}
            
            \bib{hajlasz_on_appr_diff_bd_96}{article}{
                author={Haj{\l}asz, Piotr},
 issn={0025-2611},
 issn={1432-1785},
 doi={10.1007/BF02567939},
 review={Zbl 0868.46024},
 title={On approximate differentiability of functions with bounded deformation},
 journal={Manuscripta Mathematica},
 volume={91},
 number={1},
 pages={61--72},
 date={1996},
 publisher={Springer, Berlin/Heidelberg},
}
            
            \bib{kal}{article}{
                author={Ka{\l}amajska, Agnieszka},
 issn={0039-3223},
 issn={1730-6337},
 doi={10.4064/sm-108-3-275-290},
 review={Zbl 0819.46021},
 title={Pointwise multiplicative inequalities and Nirenberg type estimates in weighted Sobolev spaces},
 journal={Studia Mathematica},
 volume={108},
 number={3},
 pages={275--290},
 date={1994},
 publisher={Polish Academy of Sciences (Polska Akademia Nauk - PAN), Institute of Mathematics (Instytut Matematyczny), Warsaw},
}

            \bib{KK11}{article}{
                author={Kirchheim, Bernd},
 author={Kristensen, Jan},
 issn={1631-073X},
 doi={10.1016/j.crma.2011.03.013},
 review={Zbl 1229.49015},
 title={Automatic convexity of rank-1 convex functions},
 journal={Comptes Rendus. Math{\'e}matique. Acad{\'e}mie des Sciences, Paris},
 volume={349},
 number={7-8},
 pages={407--409},
 date={2011},
 publisher={Acad{\'e}mie des Sciences, Paris},
}

            \bib{kohn1979new-estimates-f}{book}{
                author={Kohn, R.~V.},
                title={New estimates for deformations in terms of their strains},
                date={1979},
                url={http://0-gateway.proquest.com.pugwash.lib.warwick.ac.uk/openurl?url_ver=Z39.88-2004&rft_val_fmt=info:ofi/fmt:kev:mtx:dissertation&res_dat=xri:pqdiss&rft_dat=xri:pqdiss:8003789},
                note={Thesis (Ph.D.)--Princeton University},
            }
            
            \bib{mattila1995geometry-of-set}{book}{
                author={Mattila, Pertti},
 isbn={0-521-46576-1},
 book={
 title={Geometry of sets and measures in Euclidean spaces. Fractals and rectifiability},
 publisher={Cambridge: Univ. Press},
 },
 review={Zbl 0819.28004},
 title={Geometry of sets and measures in Euclidean spaces. Fractals and rectifiability},
 series={Cambridge Studies in Advanced Mathematics},
 volume={44},
 pages={xii + 343},
 date={1995},
 publisher={Cambridge University Press, Cambridge},
}

            \bib{MR_Old}{article}{
   author={Mironescu, Petru},
 author={Russ, Emmanuel},
 issn={0362-546X},
 doi={10.1016/j.na.2014.10.027},
 review={Zbl 1331.46025},
 title={Traces of weighted Sobolev spaces. Old and new},
 journal={Nonlinear Analysis. Theory, Methods \& Applications. Series A: Theory and Methods},
 volume={119},
 pages={354--381},
 date={2015},
 publisher={Elsevier (Pergamon), Oxford},
}
            
            \bib{Ornstein}{article}{
                author={Ornstein, Donald},
 issn={0003-9527},
 issn={1432-0673},
 doi={10.1007/BF00253928},
 review={Zbl 0106.29602},
 title={A non-inequality for differential operators in the {{\(L_ 1\)}} norm},
 journal={Archive for Rational Mechanics and Analysis},
 volume={11},
 pages={40--49},
 date={1962},
 publisher={Springer, Berlin/Heidelberg},
}
            
            \bib{preiss1987geometry-of-mea}{article}{
                author={Preiss, David},
 issn={0003-486X},
 issn={1939-8980},
 doi={10.2307/1971410},
 review={Zbl 0627.28008},
 title={Geometry of measures in {{\(\mathbb{R}^n\)}}: distribution, rectifiability, and densities},
 journal={Annals of Mathematics. Second Series},
 volume={125},
 pages={537--643},
 date={1987},
 publisher={Princeton University, Mathematics Department, Princeton, NJ},
}
            
            \bib{Raita_critical_Lp}{article}{
                author={Rai{\c{t}}{\u{a}}, Bogdan},
 issn={0002-9947},
 issn={1088-6850},
 doi={10.1090/tran/7878},
 review={Zbl 1429.26019},
 title={Critical {{\(\mathrm{L}^p\)}}-differentiability of {{\(\mathrm{BV}^{\mathbb{A}}\)}}-maps and canceling operators},
 journal={Transactions of the American Mathematical Society},
 volume={372},
 number={10},
 pages={7297--7326},
 date={2019},
 publisher={American Mathematical Society (AMS), Providence, RI},
}
            
            \bib{RS20}{article}{
                author={Raita, Bogdan},
 author={Skorobogatova, Anna},
 issn={0944-2669},
 issn={1432-0835},
 doi={10.1007/s00526-020-01739-z},
 review={Zbl 1443.47046},
 title={Continuity and canceling operators of order {{\(n\)}} on {{\(\mathbb{R}^n\)}}},
 journal={Calculus of Variations and Partial Differential Equations},
 volume={59},
 number={2},
 pages={17},
 note={Id/No 85},
 date={2020},
 publisher={Springer, Berlin/Heidelberg},
}

            \bib{Resh}{article}{
   author={Reshetnyak, Yu. G.},
 issn={0037-4466},
 issn={1573-9260},
 doi={10.1007/BF00967305},
 review={Zbl 0233.35010},
 title={Estimates for certain differential operators with finite-dimensional kernel},
 journal={Siberian Mathematical Journal},
 volume={11},
 pages={315--326},
 date={1970},
 publisher={Springer US, New York, NY; Pleiades Publishing, New York, NY; MAIK ``Nauka/Interperiodica'', Moscow},
}
            
            \bib{smith1}{article}{
                author={Smith, K. T.},
 issn={0002-9904},
 issn={1936-881X},
 doi={10.1090/S0002-9904-1961-10622-8},
 review={Zbl 0103.07602},
 title={Inequalities for formally positive integro-differential forms},
 journal={Bulletin of the American Mathematical Society},
 volume={67},
 pages={368--370},
 date={1961},
 publisher={American Mathematical Society (AMS), Providence, RI},
}
            
            \bib{smith2}{article}{
                author={Smith, K. T.},
 issn={0025-5831},
 issn={1432-1807},
 doi={10.1007/BF01435415},
 review={Zbl 0324.35009},
 title={Formulas to represent functions by their derivatives},
 journal={Mathematische Annalen},
 volume={188},
 pages={53--77},
 date={1970},
 publisher={Springer, Berlin/Heidelberg},
}
            
            \bib{larry}{article}{
                author={Smith, Larry},
 issn={0022-2518},
 doi={10.1512/iumj.1978.27.27046},
 review={Zbl 0393.55024},
 title={Nonsingular bilinear forms, generalized {{\(J\)}} homomorphisms, and the homotopy of spheres. I},
 journal={Indiana University Mathematics Journal},
 volume={27},
 pages={697--737},
 date={1978},
 publisher={Indiana University, Department of Mathematics, Bloomington, IN},
}

            \bib{Suq}{article}{
   author={Suquet, Pierre-Marie},
 issn={0366-6034},
 issn={0302-8429},
 review={Zbl 0378.35057},
 language={French},
 title={Existence et r{\'e}gularit{\'e} des solutions des {\'e}quations de la plasticite},
 journal={Comptes Rendus Hebdomadaires des S{\'e}ances de l'Acad{\'e}mie des Sciences, S{\'e}rie A},
 volume={286},
 pages={1201--1204},
 date={1978},
 publisher={Gauthier-Villars, Paris},
}
            
            \bib{tartar79}{article}{
                AUTHOR = {Tartar, L.},
                TITLE = {Compensated compactness and applications to partial
                    differential equations},
                BOOKTITLE = {Nonlinear analysis and mechanics: {H}eriot-{W}att {S}ymposium,
                    {V}ol. {IV}},
                SERIES = {Res. Notes in Math.},
                VOLUME = {39},
                PAGES = {136--212},
                PUBLISHER = {Pitman, Boston, Mass.-London},
                YEAR = {1979},
                MRCLASS = {35A35 (35Q99 49A50 49D50)},
                MRNUMBER = {584398},
                MRREVIEWER = {R. Schumann},
            }

            \bib{Tem_Str}{article}{
   author={Temam, Roger},
 author={Strang, Gilbert},
 issn={0003-9527},
 issn={1432-0673},
 doi={10.1007/BF00284617},
 review={Zbl 0472.73031},
 title={Functions of bounded deformation},
 journal={Archive for Rational Mechanics and Analysis},
 volume={75},
 pages={7--21},
 date={1980},
 publisher={Springer, Berlin/Heidelberg},
}
            \bib{Triebel}{book}{
   author={Triebel, Hans},
 issn={1017-0480},
 issn={2296-4886},
 book={
 title={Theory of function spaces},
 },
 review={Zbl 0546.46027},
 title={Theory of function spaces},
 series={Monographs in Mathematics},
 volume={78},
 date={1983},
 publisher={Birkh{\"a}user, Cham},
}

\bib{VS0}{article}{
   author={Van Schaftingen, Jean},
 issn={1631-073X},
 doi={10.1016/j.crma.2004.05.013},
 review={Zbl 1049.35069},
 title={Estimates for {{\(L^{1}\)}}-vector fields},
 journal={Comptes Rendus. Math{\'e}matique. Acad{\'e}mie des Sciences, Paris},
 volume={339},
 number={3},
 pages={181--186},
 date={2004},
 publisher={Acad{\'e}mie des Sciences, Paris},
}

\bib{VS1}{article}{
   author={Van Schaftingen, Jean},
 issn={1435-9855},
 issn={1435-9863},
 doi={10.4171/JEMS/133},
 review={Zbl 1228.46034},
 title={Estimates for {{\(L^{1}\)}}-vector fields under higher-order differential conditions},
 journal={Journal of the European Mathematical Society (JEMS)},
 volume={10},
 number={4},
 pages={867--882},
 date={2008},
 publisher={EMS Press, Berlin},
}

            \bib{van-schaftingen2013limiting-sobole}{article}{
                author={Van Schaftingen, Jean},
 issn={1435-9855},
 issn={1435-9863},
 doi={10.4171/JEMS/380},
 review={Zbl 1284.46032},
 title={Limiting Sobolev inequalities for vector fields and canceling linear differential operators},
 journal={Journal of the European Mathematical Society (JEMS)},
 volume={15},
 number={3},
 pages={877--921},
 date={2013},
 publisher={EMS Press, Berlin},
}
            
            \bib{Vol_pert_1967}{article}{
                author={Vol'pert, A. I.},
 issn={0025-5734},
 doi={10.1070/SM1967v002n02ABEH002340},
 review={Zbl 0168.07402},
 title={The spaces BV and quasilinear equations},
 journal={Mathematics of the USSR, Sbornik},
 volume={2},
 pages={225--267},
 date={1968},
 publisher={American Mathematical Society (AMS), Providence, RI},
}

            \bib{Ziemer}{book}{
   author={Ziemer, William P.},
 isbn={0-387-97017-7},
 issn={0072-5285},
 issn={2197-5612},
 book={
 title={Weakly differentiable functions. Sobolev spaces and functions of bounded variation},
 publisher={Berlin etc.: Springer-Verlag},
 },
 doi={10.1007/978-1-4612-1015-3},
 review={Zbl 0692.46022},
 title={Weakly differentiable functions. Sobolev spaces and functions of bounded variation},
 series={Graduate Texts in Mathematics},
 volume={120},
 pages={xvi + 308},
 date={1989},
 publisher={Springer, Cham},
}
            
        \end{biblist}
    \end{bibdiv}

\end{document}